\documentclass[11pt,reqno,a4paper,]{article}
\usepackage{amssymb,amsmath}
\usepackage{booktabs}     
\usepackage{multirow}     
\usepackage{array}        
\usepackage{tikz}
\usepackage{amsthm}
\usepackage{epsfig}
\usepackage{graphicx}
\usepackage{subcaption}
\usepackage{listings}
\usepackage{amssymb,amsmath}
\usepackage{enumerate}
\usepackage{cite}
\usepackage{makecell}
\newtheorem{Theorem}{Theorem}

\newtheorem{Lemma}{Lemma}

\theoremstyle{definition}
\newtheorem{Definition}{Definition}
\numberwithin{Theorem}{section}
\numberwithin{Lemma}{section}
\newtheorem{Example}{Example}
\numberwithin{Corollary}{section}
\numberwithin{Example}{section}
\numberwithin{Remark}{section}
\numberwithin{Proposition}{section}
\numberwithin{Definition}{section}
\date{}

\begin{document}
	\title{Neural network model for mathematical programming problems  with complementary constraints}
	\author{Anurag Jayswal, Ajeet Kumar$^*$
		\\\textit{Department of Mathematics and Computing, Indian Institute of Technology}
		\\\textit{(Indian School of Mines), Dhanbad-826004, Jharkhand, India}}
	\maketitle 
	\footnotetext {\hspace{-.65cm}$^{*}$Corresponding author
	{\\\textit{E-mail}: anurag@iitism.ac.in, anurag$\_$jais123@yahoo.com, (A. Jayswal),  22dr0039@iitism.ac.in, ajeetjionee@gmail.com, (A. Kumar)}}
	\noindent
	\textbf{Abstract}:
	In this paper, we propose a a gradient-based neural network model to solve the   mathematical programming problems  with complementary constraints (MPCC). In order to facilitate tractable optimization, the problem MPCC  is  transformed via a  regularized approach into a relaxed nonlinear optimization problem NLP($\beta$). After that employing  the penalty function and neural network model an estimate of the optimal solution of the problem NLP($\beta$) is obtained.  On the basis of Lyapunov stability theory and LaSalle invariance principle, the equilibrium point of proposed neural network is theoretically proven to be asymptotically stable and   capable to generate  optimal solution of the problem MPCC.
	 Further, we demonstrate the performance and dynamic behavior of the proposed neural network through various illustrative examples and its effectiveness via theoretical and numerical experiments.\\
	\noindent
	\textbf{Keywords:} Mathematical  programming problems  with complementary constraints,  neural network, Lyapunov stability, optimal solution.
	\section{Introduction} In this paper, we consider the following  constrained optimization problem
	\begin{align*}
		\textbf{(MPCC)}\hspace{2.1em}	\min \quad \ & f({w}) \\
		\text{subject to}\quad \ &  g_j(w)\leq 0, \quad \forall j=\overline{1,m},\\
		& h_i(w) = 0,  \,  \quad \forall i=\overline{ 1, l}, \\
		& 0 \leq G_k(w)  \perp  H_k(w) \geq 0, \quad  \forall k = \overline{1, s}.
	\end{align*}
	where all the functions $ f, g_j, h_i, H_k, G_k : \mathbb{R}^n \to \mathbb{R} $ are assumed to be  continuously differentiable. MPCC  is known as mathematical programming problems  with complementary constraints and the notation $ 0 \leq G_k(w)  \perp  H_k(w) \geq 0 , ~  \forall k = \overline{1, s}$ equivalently can be written as $G_k(w) \geq 0, ~H_k(w) \geq 0,~G_k(w)^T H_k(w) = 0, ~ \forall k = \overline{1, s}.$  Let $\mathcal{F}:=\{ w\in \mathbb{R}^n ~|~ g_j(w)\leq 0,  \forall j=\overline{1,m},~  h_i(w) = 0,  \forall i=\overline{ 1, l,}~ 0 \leq G_k(w) \ \perp \ H_k(w) \geq 0,\forall k = \overline{1, s}\}$ represents the set of all feasible points of the problem MPCC. The problem MPCC holds significant importance across various fields, including engineering design, economic equilibrium analysis, and multilevel game theory, see; Luo \textit{et al.} \cite{29}, and it has stimulated extensive research attention in the recent years.
\par 	The main difficulty in solving the problem MPCC, whether theoretically or numerically, stems from the presence of complementarity constraints. Such constraints cause the violation of standard constraint qualifications (such as, Mangasarian–Fromovitz constraints qualification (MFCQ),  Linear Independence Constraint Qualification (LICQ), etc.,) applicable to the traditional nonlinear optimization problems,  for instance, see Leyffer \cite{30}. As a result, the convergence conditions of standard optimization techniques are generally not met. To address this issue, several researchers over the past decade have proposed solution algorithms that exploit  the specific structure of the problem MPCC and eliminate the complications caused by the complementarity constraints. The most commonly adopted strategies to solve the problem MPCC includes penalty,  lifting, smoothing, regularization,   implicit programming, etc.,  are used to address inherent challenges associated with the problem MPCC (see \cite{1,31,32,33,34,35,15,36,37,41,42} and references cited therein). 	
 \par In the past few decades, many real-world problems have increasingly required real-time solutions; however, the traditional methods as mentioned above often struggle to provide them because of the complexity and high dimensionality of the problems. Due to intrinsic parallel processing capabilities and compatibility with hardware-based execution, neural networks  have emerged as powerful computational tools for providing real-time solutions and addressing the complex optimization problems. The use of neural networks in optimization began with Tank and
	Hopfield \cite{13} for the linear programming problem. Zhang \cite{5} considered a  Lagrange multiplier based neural networks  for general nonlinear programming problem and analyzed its stability. Later on, Kennedy and Chua \cite{14} explores the stability properties for a class of nonlinear programming circuit
	model and examine a neural network implementation. Subsequently, neural networks have been employed to address a wide range of optimization problems, including linear and nonlinear programming, variational inequalities, and complementarity problems, see \cite{16,17,18,19,20,21,22,23,24,25,26} and the references cited therein.
\par Inspired by the  adaptability and efficiency  of the neural networks, this study proposes a gradient-based neural network framework to address the MPCC problem. Due to  strongly convergent performance, we use the regularized approach considered by Kanzow and Schwartz \cite{1}.
The main outcomes of the study are as follows:
	\begin{itemize}
		\item[\textbf{(\textit{i})}] \textbf{Gradient-based neural network framework:} A new gradient-based neural network model is proposed to efficiently solve the problem MPCC.
		\item [\textbf{(\textit{ii})}] \textbf{Reformulation of the problem:} A  regularization approach is employed to transform the  problem MPCC into a continuously differentiable and constrained nonlinear programs NLP($\beta$) that typically do not exhibit the critical kinks in their feasible sets, as found in the  problem MPCC.
		\item [\textbf{(\textit{iii})}] \textbf{Method and Complexity:} By employing a penalty function technique,  the proposed neural network model has   simple one-layer architecture containing  $n$-neurons with low computational complexity in compare to the existing one Ezazipour and Golbabai \cite{28} model which has $n+2m$-neurons.
		\item [\textbf{(\textit{iv})}] \textbf{Theoretical and experimental validation:} Theoretical analysis proves that the  equilibrium point of the proposed neural network model  is asymptotically stable and  able to find the optimal solution of the problem MPCC, which is further verified through computer simulations that demonstrate the accuracy and efficiency of the model.
	\end{itemize}
	\par The rest structure of the paper is outlined as follows. In Section 2, we present  some notation and preliminary concepts associated with the problem MPCC and its transformed problem NLP($\beta$) and some definitions that are essential for obtaining the subsequent results presented in the paper.  In Section 3,  we construct the neural network model which able to find the optimal solution of the problem MPCC. In Section 4, we  analyze the asymptotic stability of the equilibrium point of  neural network and its relationship with the solution of the problem  MPCC. Further, in section 5, we provide computer simulations to showcase the utility and potential impact of the proposed approach.  Finally, in Section 6, we summarize the paper and provide the future development.
	\noindent
	\section{ Notation and preliminaries}
	
	In this section, we provide some notation and fundamental concepts of mathematical programming with complementary constraints, together with the essential definitions that serve as a basis for the theoretical developments presented in the subsequent sections. Let $\mathbb{R}^n$ be the $n$-dimensional Euclidean space and $\mathbb{R}_+^n$  be the non-negative orthant comprising all vectors in $\mathbb{R}^n$ with non-negative components.
		\begin{Definition}\cite{10}
			The differentiable function $f : \mathcal{U}\subseteq\mathbb{R}^n \to \mathbb{R}$ is said to convex  at $w^{*}\in \mathcal{U},$ if  
			$$f(w)-f(w^{*}) \geq (w-w^{*})^T\nabla f(w^{*}),~\forall w\in \mathcal{U}.$$
		\end{Definition}
	\par  Now, we define the index set of all active constraints at $ w^* \in \mathcal{F}, $ for the problem MPCC as
	$$
	\mathcal{I}_g(w^*) = \left\{ j\in \{\overline{1,m}\}  \mid g_j(w^*) = 0 \right\}.
	$$
	\par Further, for any $w^{*} \in \mathcal{F},$ we define the following index sets
	\begin{align*}
		\mathcal{I}_{00}(w^*) &:= \{ k\in \{\overline{1, s}\} \mid G_k(w^*) = 0, \, H_k(w^*) = 0 \}, \\
		\mathcal{I}_{0+}(w^*) &:= \{ k\in \{\overline{1, s}\} \mid G_k(w^*) = 0, \, H_k(w^*) > 0 \}, \\
		\mathcal{I}_{+0}(w^*) &:= \{ k\in \{\overline{1, s}\} \mid G_k(w^*) > 0, \, H_k(w^*) = 0 \}.
	\end{align*}
	It is important to note that the first subscript indicates the sign of 
	$G_k$ at a given point $w^*$, while the second subscript refers to the sign of $H_k$ at the same point.
	\begin{Definition}\cite{2}\label{def:1}
		A point   $w^*\in \mathcal{F}$ is said to be
		\begin{enumerate}
			\item[(\textit{i)}] \textbf{Weakly stationary (W-stationary)}: if there exists multipliers $\lambda \in \mathbb{R}^m, \xi \in \mathbb{R}^l, \eta, \zeta \in \mathbb{R}^s$ such that
			$$
			\nabla f(w^*) + \sum_{j=1}^m \lambda_i \nabla g_i(w^*) + \sum_{i=1}^l \xi_i \nabla h_i(w^*) - \sum_{k=1}^s \eta_k \nabla G_k(w^*) - \sum_{k=1}^s \zeta_k \nabla H_k(w^*) = 0.$$
			and
			$$
			\lambda_j \geq 0, \quad \lambda_j g_j(w^*) = 0 \  (j=\overline{1,m}),
			$$
			$$
			\eta_k = 0 \ (k \in \mathcal{I}_{+0}(w^*)), \quad \zeta_k = 0 \ (k \in \mathcal{I}_{0+}(w^*)),
			$$
			
			\item[(\textit{ii)}] \textbf{Clarke-stationary (C-stationary)}: if $w^*$ is a W-stationary and $\gamma_k \nu_k \geq 0$ for all $k \in \mathcal{I}_{00}(w^*)$,
			
			\item[(\textit{iii)}]\textbf{Mordukhovich stationary( M-stationary)}: if $w^*$ is a W-stationary and either $\eta_k > 0, \zeta_k > 0$, or $\eta_k \zeta_k = 0$ for all $k \in \mathcal{I}_{00}(w^*),$
			
			\item[(\textit{iv)}] \textbf{Strongly stationary (S-stationary)}: if $w^*$ is a W-stationary and $\eta_k \geq 0, \ \zeta_k \geq 0$ for all $k \in \mathcal{I}_{00}(w^*)$.
		\end{enumerate}
	\end{Definition}
Graphical view of these stationary conditions are depicted in the  Figure \ref{fig:1}.	The difference between  these  stationarity conditions comes from the multiplier index set $\mathcal{I}_{00}(w^*).$ If $\mathcal{I}_{00}(w^*)$ is empty, then all  stationarity conditions are coincide. The relationships between these stationarity conditions are organized as follows:
	 $$\text{S-stationarity} \Rightarrow \text{M-stationarity}\Rightarrow \text{C-stationarit} \Rightarrow \text{W-stationarity.}$$	
	
	
	
	
	
	
		\begin{figure}
		\begin{tabular}{cc}
			
		\begin{tikzpicture}[scale=1.2]
		\node at (0,-2.6) {(a) W-stationary};
		\node[black] at (0,-2.2) {$(\eta_k, \zeta_k~ \text{free})$};
		
		\fill[green!40] (0,0) rectangle (-1.5,1.5);
		\fill[green!40] (-1.5,-1.5) rectangle (0,0);
		\fill[green!40] (-1.5,-1.5) rectangle (0,0);
		\fill[green!40] (0,0) rectangle (1.5,1.5);
		\fill[green!40] (0,0) rectangle (1.5,-1.5);
		\draw[line width=4pt, green!100] (-1.5,0) -- (1.5,0);
		\draw[line width=4pt, green!100] (0,-1.5) -- (0,1.5);
		
		\draw[->] (1.5,0) -- (2,0) node[right] {$\eta_k$};
		\draw (-2,0) -- (-1.5,0);
		
		\draw[->] (0,1.5) -- (0,2) node[above] {$\zeta_k$};
		\draw (0,-2) -- (0,-1.5);
	\end{tikzpicture}
	\hspace{5.5em}
			&
		\begin{tikzpicture}[scale=1.2]
			\node at (0,-2.6) {(b) C-stationary};
			\node[black] at (0,-2.2) {$(\eta_k\zeta_k\geq 0)$};
			
			\fill[green!40] (-1.5,-1.5) rectangle (0,0);
			\fill[green!40] (-1.5,-1.5) rectangle (0,0);
			\fill[green!40] (0,0) rectangle (1.5,1.5);
			\draw[line width=4pt, green!100] (-1.5,0) -- (1.5,0);
			\draw[line width=4pt, green!100] (0,-1.5) -- (0,1.5);
			
			\draw[->] (1.5,0) -- (2,0) node[right] {$\eta_k$};
			\draw (-2,0) -- (-1.5,0);
			
			\draw[->] (0,1.5) -- (0,2) node[above] {$\zeta_k$};
			\draw (0,-2) -- (0,-1.5);
		\end{tikzpicture}
			
			\\
			
		\begin{tikzpicture}[scale=1.2]
			\node at (0,-2.6) {(c) M-stationary};
			\node[black] at (0,-2.2) {$(\eta_k,\zeta_k> 0~ \text{or}~\eta_k\zeta_k= 0)$};
			
			\fill[green!40] (0,0) rectangle (1.5,1.5);
			\draw[line width=4pt, green!100] (-1.5,0) -- (1.5,0);
			\draw[line width=4pt, green!100] (0,-1.5) -- (0,1.5);
			
			\draw[->] (1.5,0) -- (2,0) node[right] {$\eta_k$};
			\draw (-2,0) -- (-1.5,0);
			
			\draw[->] (0,1.5) -- (0,2) node[above] {$\zeta_k$};
			\draw (0,-2) -- (0,-1.5);
		\end{tikzpicture}
			\hspace{5.5em}
			&
			\begin{tikzpicture}[scale=1.2]
				\node at (0,-2.6) {(d) S-stationary};
				\node[black] at (0,-2.2) {$~~~~(\eta_k, \zeta_k\geq 0)$};
				
				\fill[green!40] (0,0) rectangle (1.5,1.5);
				\draw[line width=4pt, green!100] (0,0) -- (1.5,0);
				\draw[line width=4pt, green!100] (0,0) -- (0,1.5);
				
				\draw[->] (1.5,0) -- (2,0) node[right] {$\eta_k$};
				\draw (0,0) -- (-2.0,0);
				
				\draw[->] (0,1.5) -- (0,2) node[above] {$\zeta_k$};
				\draw (0,0) -- (0,-2.0);
			\end{tikzpicture}
	\end{tabular}
		\caption{Graphical view of the stationary conditions for an index $k\in\mathcal{I}_{00}(w^*).$} 
			\label{fig:1}
\end{figure}
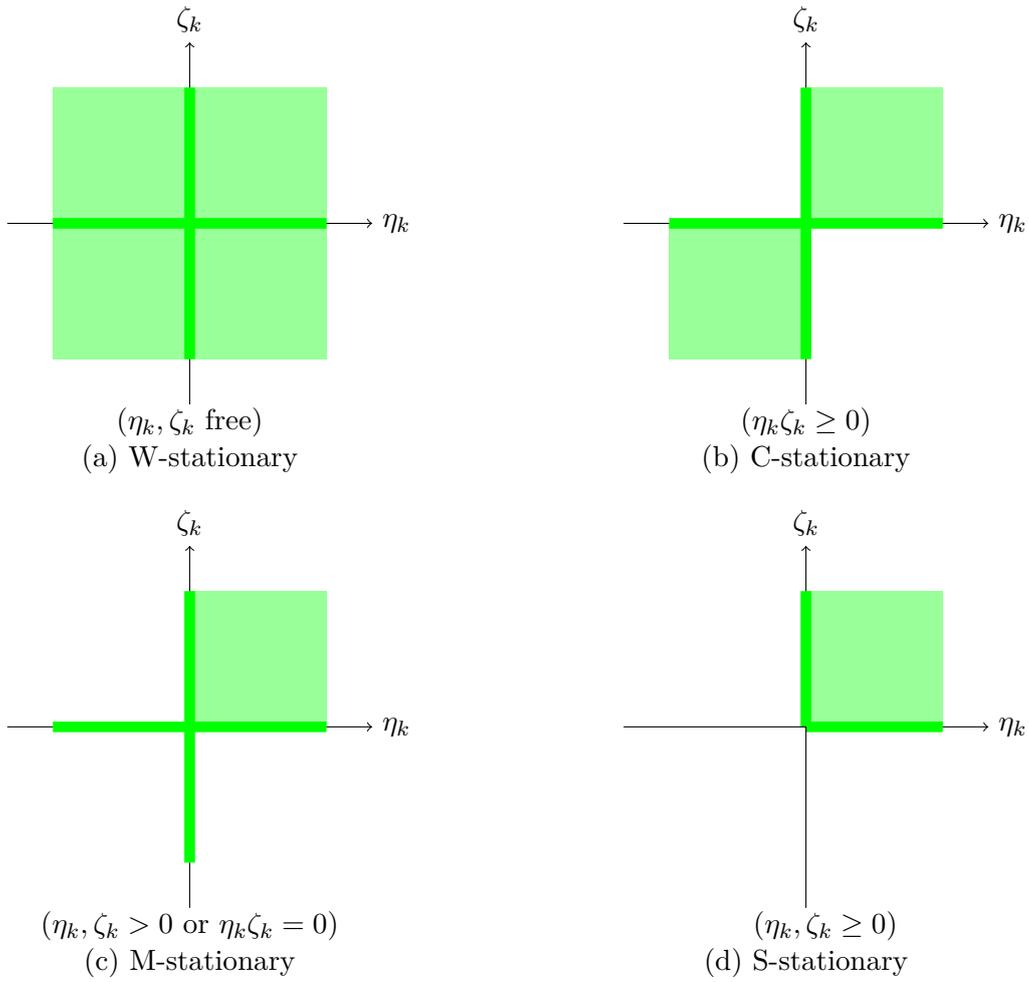
	\begin{Definition}\cite{1}
		A   point $w^* \in \mathcal{F}$  is said to satisfy the MPCC-linearly independent constraint qualification (MPCC-LICQ), if the gradients 
		\begin{align*}
			&\{\nabla g_j(w^*) \mid j \in \mathcal{I}_{g}(w^*)\} \cup \{\nabla h_i(w^*) \mid i=\overline{ 1, l}\} \\
			& ~~~~~~~\cup \{\nabla G_k(w^*) \mid k \in \mathcal{I}_{0+}(w^*)\cup \mathcal{I}_{00}(w^*)\} \cup \{\nabla H_k(x^*) \mid k \in \mathcal{I}_{+0}(w^*)\cup \mathcal{I}_{00}(w^*)\}
		\end{align*}
		are linearly independent.
	\end{Definition}

To solve the problem MPCC, whether theoretically or numerically, the main  difficulties stems from the presence of complementarity constraints, which violate the  standard constraints qualification (see \cite{30}), and generally lead to the feasible set  is nonsmooth and nonconvex, (see \cite{9}). To remove such critical kinks presented in the problem MPCC, 
 we use the following regularization approach considered by Kanzow and Schwartz  \cite{1}.
	\par Let $\phi(p,q)$ be a regularized function defined by
$$
	\phi(p,q) := 
	\begin{cases} 
		pq & if ~ p+q \geq 0, \\
		-\tfrac{1}{2}(p^2+q^2) & if ~ p+q < 0.
	\end{cases}
	$$
\begin{Lemma}\cite{1} The function $\phi$ exhibits the following properties:
	\begin{itemize}
		\item[(\textit{i})] $\phi$ is an \emph{NCP-function}, i.e., $\phi(p,q) = 0$ if and only if $p \geq 0$, $q \geq 0$, $pq = 0$.
		\item[(\textit{ii})] $\phi$ is continuously differentiable with gradient
	$$
		\nabla \phi(p,q) = 
		\begin{cases}
			\begin{pmatrix}
				q \\ p
			\end{pmatrix} & \text{if } p+q \geq 0,
			\\ 
			 
			 \begin{pmatrix}
				-p \\ -q
			\end{pmatrix} & \text{if } p+q < 0.
		\end{cases}
	$$
		\item[(\textit{iii})] $\phi$ has the property that
	$$
		\phi(p,q) \begin{cases}
			> 0 & \text{if } p> 0 \text{ and } q > 0, \\
			< 0 & \text{if } p < 0 \text{ or } q < 0.
		\end{cases}
		$$
	\end{itemize}
\end{Lemma}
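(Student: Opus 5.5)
The plan is a direct case analysis on the two regions $\{p+q\ge 0\}$ and $\{p+q<0\}$ that define $\phi$, with care on the boundary line $p+q=0$.

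\emph{Property (iii), then (i).} If $p>0$ and $q>0$, then $p+q>0$, so $\phi(p,q)=pq>0$. Now suppose $p<0$ or $q<0$, say $p<0$; the case $q<0$ follows by the symmetry $\phi(p,q)=\phi(q,p)$. If $p+q<0$, then $\phi(p,q)=-\tfrac12(p^2+q^2)\le -\tfrac12 p^2<0$. If $p+q\ge 0$, then $q\ge -p>0$, so $\phi(p,q)=pq<0$. This proves (iii).

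For (i), first take the backward direction: if $p,q\ge0$ and $pq=0$, then $p+q\ge0$ and $\phi=pq=0$. Conversely, suppose $\phi(p,q)=0$. By (iii), neither $p<0$ nor $q<0$ can hold, and $p,q>0$ cannot hold either. Hence $p,q\ge0$ with at least one of them zero, which gives $pq=0$.

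\emph{Property (ii).} On the open set $\{p+q>0\}$ the function is the polynomial $pq$, with gradient $(q,p)^T$. On the open set $\{p+q<0\}$ it is $-\tfrac12(p^2+q^2)$, with gradient $(-p,-q)^T$. The main step is the boundary line $p+q=0$, where we need three facts:
\begin{enumerate}
\item[(a)] $\phi$ is continuous across the line: with $q=-p$, the value $pq=-p^2$ coincides with $-\tfrac12(p^2+p^2)=-p^2$.
\item[(b)] The two gradient formulas agree on the line: $(q,p)=(-p,-q)$ precisely when $q=-p$.
\item[(c)] $\phi$ is differentiable at each boundary point $(p_0,-p_0)$ with gradient $(-p_0,p_0)^T$.
\end{enumerate}

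For (c), I would write $\phi=\max$/$\min$-free as follows. Let $\psi_1=pq$ and $\psi_2=-\tfrac12(p^2+q^2)$. Their difference is
\[
\psi_1-\psi_2=\tfrac12(p+q)^2,
\]
so $\phi=\psi_1-\tfrac12\big(\min\{p+q,0\}\big)^2$. The function $t\mapsto \tfrac12\min\{t,0\}^2$ is $C^1$ with derivative $\min\{t,0\}$. Composing it with the linear map $(p,q)\mapsto p+q$ gives a $C^1$ function, so $\phi$ is $C^1$ on all of $\mathbb{R}^2$. Its gradient is
\[
(q,p)^T-\min\{p+q,0\}(1,1)^T,
\]
which reduces to the stated formula in each region.

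This representation handles (c) and continuity of the gradient in one stroke, so the boundary line, which I expected to be the only genuine obstacle, is resolved by it.
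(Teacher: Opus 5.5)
Your proof is correct and complete. The paper gives no argument of its own for this lemma; it simply quotes it from Kanzow and Schwartz \cite{1}. Your proposal therefore replaces a citation with a self-contained verification rather than competing with an in-paper proof.

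For (iii), your case split is the natural one, and deriving (i) from (iii) is the efficient order. The symmetry $\phi(p,q)=\phi(q,p)$ that you invoke is legitimate, because both branches and the region condition $p+q\ge 0$ are symmetric in $p$ and $q$.

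For (ii), the most direct alternative would be to compute one-sided difference quotients at points of the line $p+q=0$, then check separately that the two gradient formulas glue continuously there. Your identity $pq+\tfrac12(p^2+q^2)=\tfrac12(p+q)^2$ gives $\phi(p,q)=pq-\tfrac12\bigl(\min\{p+q,0\}\bigr)^2$. This reduces all of that to the one-variable fact that $t\mapsto\tfrac12\min\{t,0\}^2$ is $C^1$ with derivative $\min\{t,0\}$.

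The representation has two consequences:
\begin{itemize}
\item Your items (a) and (b) become redundant: they follow from the representation rather than being prerequisites for it.
\item It yields the branch-free gradient $\nabla\phi(p,q)=(q,p)^T-\min\{p+q,0\}(1,1)^T$. By the chain rule this also gives a single formula for $\nabla_w\mathcal{B}_k(w,\beta)$, which is what the right-hand side of \eqref{eq:3} actually needs.
\end{itemize}

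The only blemish is expository: the phrase ``$\phi=\max$/$\min$-free'' is garbled, since your representation does use a $\min$.
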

Based on the $\phi$-function, we define the following continuously differentiable mapping 
 $\mathcal{B}: \mathbb{R}^n \to \mathbb{R}^s$ given by
\par $\mathcal{B}_k(w, \beta) := \phi \bigl(G_k(w) - \beta, \, H_k(w) - \beta \bigr)$\\
$
{}\hspace{5.4em}= \begin{cases}
	\bigl(G_k(w) - \beta \bigr)\bigl(H_k(w) - \beta \bigr), 
	& if ~ G_k(w) + H_k(w) \geq 2\beta, \\
	-\tfrac{1}{2} \Bigl( \bigl(G_k(w) - \beta \bigr)^2 + \bigl(H_k(w) - \beta \bigr)^2 \Bigr), 
	& if ~ G_k(w) + H_k(w) < 2\beta,
\end{cases}
$\\\\
where $\beta \geq 0$ is an arbitrary parameter. From a geometrical perspective, this leads to a set with the structure represented in the Figure \ref{fig:2}.
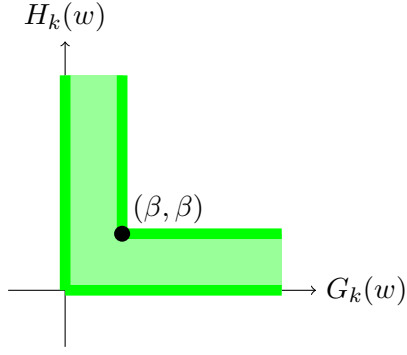
\begin{figure}
	\begin{center}
		\begin{tikzpicture}[scale=1.5]
			
			\draw[->] (-0.5,0) -- (2.2,0) node[right] {$G_k(w)$};
			\draw[->] (0,-0.5) -- (0,2.2) node[above] {$H_k(w)$};
			
			\fill[green!40] (0,0) rectangle (.5,1.9);
			\fill[green!40] (0,0) rectangle (1.9,.5);
			\draw[line width=4pt, green!100] (0,0) -- (1.9,0);
			\draw[line width=4pt, green!100] (0.5,0.5) -- (0.5,1.9);
			\draw[line width=4pt, green!100] (0.5,0.5) -- (1.9,0.5);
			\draw[line width=4pt, green!100] (0,0) -- (0,1.9);
			
			\fill (.5,.5) circle (2.0pt);
			\node[above right] at (.5,.5) {$(\beta,\beta)$};
			
		\end{tikzpicture}
	\end{center}
	\caption{Geometrical  view of the  $\mathcal{B}_k(w, \beta).$}
	\label{fig:2}
	
\end{figure}\\
	Now, using the function $\mathcal{B}_k(w, \beta),$ we get the following   regularized problem:
$$
	\begin{aligned}
	{}\hspace{5.1em}	\min \; & f(w) \\
		\text{subject to } 
		& g_j(w) \leq 0, \quad \forall j=\overline{1,m}, \\
		& h_i(w) = 0, \quad   \forall i=\overline{ 1, l}, \\
		& G_k(w) \geq 0, \quad \forall k = \overline{1, s}, \\
		& H_k(w) \geq 0, \quad \forall k = \overline{1, s}, \\
		& \mathcal{B}_k(w, \beta) \leq 0, \quad\forall k = \overline{1, s},
	\end{aligned}
	$$
Further,  for the simplification the above problem can be rewritten as
$$
\begin{aligned}
\textbf{(NLP($\beta$))}\hspace{5.1em}	\min \; & f(w) \\
	\text{subject to } 
	& h(w) = 0, \\
	& \mathcal{N}(w, \beta) \leq 0.
\end{aligned}
$$
\text{where} $ \mathcal{N}(w, \beta)=(g_j(w),- G_k(w), -H_k(w), ~\mathcal{B}_k(w, \beta))^T,~ h(w)=[h_i(w) ].$ Let $\mathcal{M}$ be the feasible set to the problem NLP($\beta$).
The optimization problem NLP($\beta$) is continuously differentiable and constrained nonlinear programs that typically do not exhibit the critical kinks as discussed above. Therefore, we expect that standard optimization software can be used to solve the NLP($\beta$) problems.
 
 
\par  The following theorem exhibits the relationship  between  the  problems NLP($\beta$) and  MPCC.
  \begin{Theorem}\cite{1}\label{thm:2.1}
  	Let $w^{\beta}$ be a KKT-point of the problem NLP($\beta$) with the  multipliers $(\xi^{\beta}, u^{\beta}),$ and  
   $(w^{\beta}, \xi^{\beta}, u^{\beta}) \to (w^*, \xi^*, u^*)$ as $\beta \to 0$. 
  If MPCC-LICQ holds at $w^*,$	then $w^*$ is an M-stationary point to the problem MPCC.
  \end{Theorem}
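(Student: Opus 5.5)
The plan is to pass to the limit in the KKT system of NLP($\beta$), using the gradient formula for $\phi$ to turn the multipliers of the constraint $\mathcal{B}_k\le 0$ into MPCC multipliers $\eta_k,\zeta_k$. Write the multipliers as $u^\beta=(\lambda^\beta,\mu^{G,\beta},\mu^{H,\beta},\delta^\beta)\ge 0$ for the blocks $g$, $-G$, $-H$, $\mathcal{B}$ of $\mathcal{N}$. The KKT condition of NLP($\beta$) at $w^\beta$ reads
\begin{align*}
\nabla f(w^\beta)+\sum_j\lambda_j^\beta\nabla g_j(w^\beta)+\sum_i\xi_i^\beta\nabla h_i(w^\beta)-\sum_k\mu_k^{G,\beta}\nabla G_k(w^\beta)-\sum_k\mu_k^{H,\beta}\nabla H_k(w^\beta)+\sum_k\delta_k^\beta\nabla_w\mathcal{B}_k(w^\beta,\beta)=0,
\end{align*}
together with complementary slackness.

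By Lemma 2.1(ii), $\nabla_w\mathcal{B}_k=(H_k-\beta)\nabla G_k+(G_k-\beta)\nabla H_k$ when $G_k+H_k\ge 2\beta$, and $\nabla_w\mathcal{B}_k=-(G_k-\beta)\nabla G_k-(H_k-\beta)\nabla H_k$ otherwise, all evaluated at $w^\beta$. Accordingly, set
\begin{align*}
\eta_k^\beta:=\mu_k^{G,\beta}-\delta_k^\beta\,\alpha_k^\beta,\qquad \zeta_k^\beta:=\mu_k^{H,\beta}-\delta_k^\beta\,\gamma_k^\beta,
\end{align*}
where $\alpha_k^\beta,\gamma_k^\beta$ are the coefficients of $\nabla G_k$ and $\nabla H_k$ in $\nabla_w\mathcal{B}_k$. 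The KKT equation then has exactly the W-stationarity form with multipliers $(\lambda^\beta,\xi^\beta,\eta^\beta,\zeta^\beta)$.

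\textbf{Step 1: feasibility and sign of $\lambda$.} Since $(w^\beta,\xi^\beta,u^\beta)\to(w^*,\xi^*,u^*)$, all of $\lambda^\beta,\mu^\beta,\delta^\beta,\eta^\beta,\zeta^\beta$ converge. Continuity gives feasibility of $w^*$ for MPCC: from $G_k,H_k\ge 0$ and $\mathcal{B}_k\le 0$ we get $\phi(G_k(w^*),H_k(w^*))\le 0$ with both arguments nonnegative, so $G_kH_k=0$ by Lemma 2.1. The limits also give $\lambda^*\ge 0$ and $\lambda^*_jg_j(w^*)=0$, since inactive constraints stay inactive near $w^*$.

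\textbf{Step 2: vanishing multipliers.} For $k\in\mathcal{I}_{+0}(w^*)$, for small $\beta$ we have $G_k(w^\beta)>0$, so $\mu_k^{G,\beta}=0$. Moreover, for small $\beta$ we are in the branch $G_k+H_k\ge 2\beta$, so the $\nabla G_k$ coefficient is $\delta_k^\beta(H_k(w^\beta)-\beta)\to\delta_k^*\cdot 0$. Hence $\eta_k^*=0$, and symmetrically $\zeta_k^*=0$ on $\mathcal{I}_{0+}$. Thus $w^*$ is W-stationary. MPCC-LICQ is not even needed here, but it guarantees that the limiting multipliers are unique. One could alternatively use LICQ to prove boundedness of the multipliers if their convergence were not assumed.

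\textbf{Step 3: M-stationarity on $\mathcal{I}_{00}(w^*)$ (main obstacle).} We must show that $\eta_k^*\zeta_k^*<0$ is impossible. Fix $k\in\mathcal{I}_{00}$, pass to a subsequence on which the branch and the activity pattern are fixed, and split into cases.
\begin{itemize}
\item[(a)] If $\delta_k^\beta=0$, then $\eta_k^\beta=\mu^{G,\beta}_k\ge 0$ and $\zeta_k^\beta\ge 0$, so the limits satisfy $\eta_k^*,\zeta_k^*\ge 0$.
\item[(b)] If $\delta_k^\beta>0$, then $\mathcal{B}_k=0$. On the branch $G_k+H_k\ge 2\beta$ this means $G_k=\beta$ or $H_k=\beta$ (the kink lines in Figure 2). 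On the branch $G_k+H_k<2\beta$, $\mathcal{B}_k=0$ forces $G_k=H_k=\beta$, which contradicts $G_k+H_k<2\beta$, so that branch cannot occur.
\end{itemize}
In case (b), say $G_k(w^\beta)=\beta$ and $H_k(w^\beta)\ge\beta>0$, so $\mu^{G,\beta}_k=\mu^{H,\beta}_k=0$. Then $\eta_k^\beta=-\delta_k^\beta(H_k-\beta)\le 0$ and $\zeta_k^\beta=-\delta_k^\beta(G_k-\beta)=0$. Hence $\zeta_k^*=0$ and $\eta_k^*\zeta_k^*=0$. The case $H_k=\beta$ is symmetric. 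The corner $G_k=H_k=\beta$ gives $\eta_k^\beta=\zeta_k^\beta=0$.

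In every case the limit lies in the M-stationary set of Figure 1(c), which finishes the proof. The care needed is in the subsequence and case bookkeeping for $\mathcal{I}_{00}$, in particular ruling out mixed activity of $G_k\ge 0$ together with $\mathcal{B}_k=0$, which is excluded since $\beta>0$.
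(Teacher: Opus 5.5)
Your proof is correct. The paper gives no argument of its own for this statement; it is quoted from Kanzow and Schwartz \cite{1}. Your route is essentially the argument used there: rewrite the KKT system of NLP($\beta$) via $\nabla\phi$ into W-stationarity form, pass to the limit, and analyse $\mathcal{I}_{00}(w^*)$ using the fact that an active $\mathcal{B}_k$ forces $G_k(w^\beta)=\beta$ or $H_k(w^\beta)=\beta$.

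The real difference is in the hypotheses. In \cite{1} only convergence of $w^\beta$ is assumed. There the constraint qualification is what bounds the aggregated multipliers $(\lambda^\beta,\xi^\beta,\eta^\beta,\zeta^\beta)$, while the multiplier $\delta_k^\beta$ of $\mathcal{B}_k\le 0$ may diverge, and your case (b) is exactly what that situation requires.

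In the statement as posed here, all of $u^\beta$ (including $\delta^\beta$) converges. You are right that MPCC-LICQ then plays no role, and Step 3 can be both shortened and sharpened:
\begin{itemize}
\item For $k\in\mathcal{I}_{00}(w^*)$, the arguments $(G_k(w^\beta)-\beta,\,H_k(w^\beta)-\beta)$ of $\nabla\phi$ tend to $(0,0)$, and $\nabla\phi(0,0)=0$.
\item Hence $\delta_k^\beta\nabla_w\mathcal{B}_k(w^\beta,\beta)\to 0$, which gives $\eta_k^*=\mu_k^{G,*}\ge 0$ and $\zeta_k^*=\mu_k^{H,*}\ge 0$.
\item In your case (b) this appears as $\eta_k^*=-\lim\delta_k^\beta\bigl(H_k(w^\beta)-\beta\bigr)=0$, not merely $\le 0$.
\end{itemize}
So the stated hypothesis actually yields S-stationarity, which implies the claimed M-stationarity. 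Conversely, a limit that is M- but not S-stationary at some index of $\mathcal{I}_{00}(w^*)$ forces $\delta_k^\beta\to\infty$. That is why \cite{1} needs the finer case analysis you gave.

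Two small points of presentation. First, at the start of Step 3 you say it suffices to rule out $\eta_k^*\zeta_k^*<0$; that is the C-stationarity requirement. M-stationarity additionally excludes $\eta_k^*<0,\ \zeta_k^*<0$. Your cases do exclude that quadrant (case (a) gives both multipliers nonnegative, case (b) makes one of them vanish), so the conclusion is unaffected, but the stated goal should be corrected. Second, the convergence of $\eta^\beta,\zeta^\beta$ claimed in Step 1 relies on continuity of $\nabla\phi$ (Lemma 2.1(ii)), because $\alpha_k^\beta,\gamma_k^\beta$ are defined branchwise; this deserves a sentence.
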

	\section{Construction  of neural network model}
	In general, to solve the constrained optimization problems using neural networks a key approach is to construct an appropriate energy function such that its minimum corresponds to the optimal solution of the considered problem. Therefore, to tackle the problem NLP($\beta$), we use the penalty function method to  transforms the constrained problem into an unconstrained one as described in Luenberger and Ye \cite{6}. 
	\begin{equation}
		\min \mathcal{E}(w, \beta)=f(w)+b \mathcal{P}
		\label{eq:1}
	\end{equation}
	where   $b$ is a positive constant  $\mathcal{P}:\mathbb{R}^n\to \mathbb{R}$ is a penalty function. 
	 Here, we assume that the penalty function $\mathcal{P}$ is continuous and $\mathcal{P}(w) \ge 0,  \forall w \in \mathbb{R}^n$ and $\mathcal{P}(w) = 0 \Leftrightarrow w \in \mathcal{M}$. Further, an effective penalty function for a set of inequality constraints $\{w : c_\alpha(w) \leq 0, \; \alpha = 1, 2, \ldots, \zeta\}
	$ is defined by  
	 $$
	\mathcal{P}(w) = \frac{1}{2}\|c^{+}(w)\|^{2} = \frac{1}{2}\sum_{\alpha=1}^{\zeta} \left( \max\{0, c_\alpha(w)\} \right)^{2}.
$$
	\par
And, thus we formulate $\mathcal{E}(w, \beta)$ (an energy function) to approximate the problem NLP($\beta$) as follows:
	\begin{equation}
		\mathcal{E}(w, \beta)=  f(w) + \frac{\lambda}{2} 
		\left\{ 
		\|  \mathcal{N}^+(w, \beta) \|^2 + \| h(w) \|^2
		\right\},
	\label{eq:2}
	\end{equation}
	where  $\lambda$   is a large penalty parameter. It can be seen for  large positive value of $\lambda$ and for every fixed $\beta,$ the energy function  $\mathcal{E}(w, \beta)$ 
 possesses both non-negativity and continuous differentiability.
 \par 
 In order to create neural network, we assume $w(.)$ is  some time-dependent variable and $w (t, w_0)$  represents a  trajectory starting from the initial point $w_0=w(t_0)$, where $t_0$ be an initial time.
Now, using the steepest descent method, we formulate the following neural network to efficiently handle the problem MPCC.
 \begin{equation}
 	\frac{dw}{dt} = -\nabla \mathcal{E}(w, \beta) = -\nabla f(w) - \lambda \left\{ \mathcal{N}^+(w,\beta) \nabla \mathcal{N}(w,\beta)  + h(w) \nabla h(w)  \right\}.\label{eq:3}
 \end{equation}
 Also, a simplified block diagram for the neural network \eqref{eq:3} is presented in the Figure \ref{fig:block}. 
  \begin{figure}[h!]
 	\centering
 	\includegraphics[width=0.8\textwidth]{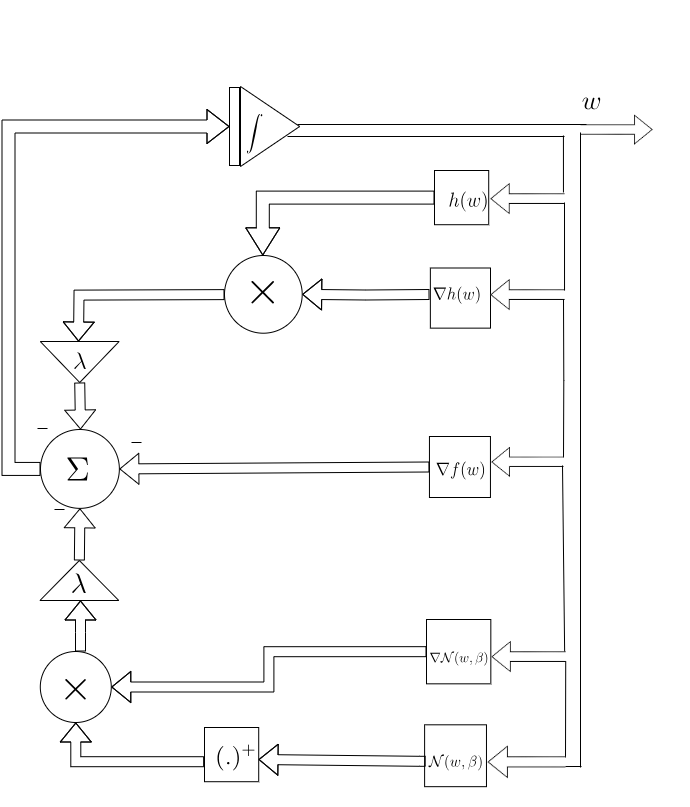}
 	\caption{The architectural design of the proposed neural network model \eqref{eq:3}.}
 	\label{fig:block}
 \end{figure}
 \par Now, on the lines of Luenberger and Ye \cite{6}, we give the following theorem which establish the relationship between the unconstrained problem \eqref{eq:1} and NLP($\beta$).
 \newpage
 \begin{Theorem}\label{thm:3.1}
 	Assume that $\{\beta^k\}$ is a sequence of nonzero terms approaching to zero as $k$ tends to infinity. 
 	Let $\{\lambda^k\}$ be a sequence of positive real numbers that increases with $k$ and diverges to infinity. 
 	If the sequence $\{w^k\}$ is generated using the penalty method. 
 	Then, any limit point of the sequence $\{w^k\}$  solve the   problem NLP($\beta$). 
 \end{Theorem}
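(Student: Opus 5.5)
This is the classical penalty convergence theorem of Luenberger and Ye, adapted to NLP($\beta$). We take $w^k$ to be a global minimizer of $\mathcal{E}_k(w):=\mathcal{E}(w,\beta^k)$ with penalty parameter $\lambda^k$. That is, $w^k$ minimizes $q(\lambda^k,w):=f(w)+\lambda^k\mathcal{P}_k(w)$, where
$$\mathcal{P}_k(w)=\tfrac12\bigl\{\|\mathcal{N}^+(w,\beta^k)\|^2+\|h(w)\|^2\bigr\}.$$
Since $\beta^k\to 0$, the natural reading of ``solves NLP($\beta$)'' is that a limit point $\bar w$ is a global minimizer of the limiting problem NLP($\beta$) with $\beta=\lim\beta^k=0$, i.e. of the MPCC. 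The same argument with $\beta^k\equiv\beta$ fixed gives the result for a fixed $\beta$.

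We assume, as in Luenberger and Ye, that NLP has a solution $w^*$ and that $f$ is continuous. Note also that $\mathcal{P}_k(w)=0$ exactly when $w\in\mathcal{M}_{\beta^k}$. A key preliminary point is that $\mathcal{M}_0=\mathcal{F}\subseteq\mathcal{M}_{\beta}$ for every $\beta\ge0$. Indeed, if $G_k,H_k\ge0$ with $G_kH_k=0$, then Lemma (iii) applied to $\phi(G_k-\beta,H_k-\beta)$, where one argument is $\le0$, gives $\mathcal{B}_k\le0$. Hence $w^*$ is feasible for every NLP($\beta^k$), so $\mathcal{P}_k(w^*)=0$.

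\textbf{Step 1 (monotonicity).} Following the standard lemma, we compare $q(\lambda^k,w^k)\le q(\lambda^{k+1},w^{k+1})$ and $\mathcal{P}_k$-values using minimality and $\lambda^{k+1}>\lambda^k$. In the varying-$\beta$ case we do not need full monotonicity. The key inequality is simply
$$f(w^k)\le f(w^k)+\lambda^k\mathcal{P}_k(w^k)=q(\lambda^k,w^k)\le q(\lambda^k,w^*)=f(w^*).$$

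\textbf{Step 2 (limit point is feasible).} Let $w^{k}\to\bar w$ along a subsequence. By continuity, $f(w^k)\to f(\bar w)$, so $f(w^k)$ is bounded below along the subsequence. Then $\lambda^k\mathcal{P}_k(w^k)\le f(w^*)-f(w^k)$ is bounded, and since $\lambda^k\to\infty$ we get $\mathcal{P}_k(w^k)\to0$. The map $(w,\beta)\mapsto\mathcal{P}(w,\beta)$ is jointly continuous, because $\phi$, $G_k$, $H_k$, $g_j$, $h_i$ and $\max\{0,\cdot\}$ are continuous. Hence $\mathcal{P}(\bar w,0)=0$, i.e. $h(\bar w)=0$ and $\mathcal{N}(\bar w,0)\le0$. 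By Lemma (i), $\mathcal{B}_k(\bar w,0)=\phi(G_k,H_k)\le0$ together with $G_k,H_k\ge0$ forces $G_kH_k=0$, so $\bar w\in\mathcal{F}$.

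\textbf{Step 3 (optimality).} Passing to the limit in $f(w^k)\le f(w^*)$ gives $f(\bar w)\le f(w^*)$. Combined with the feasibility from Step 2, this shows $\bar w$ is optimal.

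The main obstacle is Step 2. It needs the joint continuity of the penalty in $\beta$ and the inclusion $\mathcal{F}\subseteq\mathcal{M}_\beta$, so that the single comparison point $w^*$ works for all $k$. Without that inclusion, the upper bound $q(\lambda^k,w^k)\le f(w^*)$ would fail. If instead $\beta$ is held fixed, both issues disappear and the proof is verbatim Luenberger–Ye.
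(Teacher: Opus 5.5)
Your proposal is correct and takes the same route as the paper, whose proof simply defers to the Luenberger--Ye penalty convergence theorem and omits the details. You supply those omitted details. You read the conclusion as optimality for NLP($0$), i.e.\ the MPCC; for a fixed $\beta>0$ the conclusion would fail in general. You handle the varying $\beta^k$ via $\mathcal{F}\subseteq\mathcal{M}_{\beta}$ and joint continuity of the penalty in $(w,\beta)$. Your subsequence bound replaces Luenberger--Ye's monotonicity lemma, which is not available when $\beta^k$ varies.
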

 \begin{proof}
 	As the proof is analogous to the convergence theorem discussed in [Luenberger and Ye \cite{6}, pp. 404], hence, we omit the details.
 	\end{proof}

	\section{Stability and convergence analysis}
	In this section, we present  the stability and convergence properties of the neural network  (\ref{eq:3}) and  examine the relationship between the equilibrium of the neural network  along with the approximate solution of the problem  MPCC  as the  parameter $\beta $ approaches to zero. 
	
\begin{Theorem}
Let the point $w(t_0)=w_0$ be an intial point of the neural network \eqref{eq:3}.	If the level set $
	L(w_0) = \{ w \in \mathbb{R}^n \mid \mathcal{E}(w, \beta) \leq \mathcal{E}(w_0, \beta) \}$
	is bounded, then the neural network  \eqref{eq:3} possesses an equilibrium point 
	$w^*$, and there exists a strictly increasing sequence $\{t_k\}$ such that 
	\begin{itemize}
		\item [(\text{a})]$\lim\limits_{k \to +\infty} w(t_k, w_0) = w^*~\text{along with}~  \nabla \mathcal{E}(w^*, \beta)=0,$
		\item [(\text{b})] $\lim\limits_{t\to +\infty} \mathcal{E}[w(t), \beta]=\mathcal{E}(w^*, \beta).$
	\end{itemize}
\end{Theorem}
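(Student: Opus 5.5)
The plan is to treat $\mathcal{E}(\cdot,\beta)$ as a Lyapunov function for the gradient flow \eqref{eq:3}, with $\beta$ fixed, and use the bounded level set $L(w_0)$ for compactness.

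\emph{Step 1: regularity of $\nabla\mathcal{E}$.} First I check that $\nabla\mathcal{E}(\cdot,\beta)$ is continuous. The functions $f,g_j,h_i,G_k,H_k$ are $C^1$, and $\mathcal{B}_k(\cdot,\beta)$ is $C^1$ by part (ii) of the Lemma on $\phi$. The map $t\mapsto (t^+)^2$ is $C^1$ with derivative $2t^+$. Hence the right-hand side of \eqref{eq:3} is continuous, and Peano's theorem gives a local solution $w(t,w_0)$. If local Lipschitz continuity of $\nabla\mathcal{E}$ is wanted for uniqueness, I would add the assumption that the data are $C^2$; the argument below does not need uniqueness.

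\emph{Step 2: monotonicity and global existence.} Along any trajectory,
\begin{equation*}
\frac{d}{dt}\mathcal{E}(w(t),\beta)=\nabla\mathcal{E}(w(t),\beta)^T\frac{dw}{dt}=-\|\nabla\mathcal{E}(w(t),\beta)\|^2\le 0 .
\end{equation*}
So $\mathcal{E}$ is nonincreasing and $w(t)\in L(w_0)$ for all $t$ in the existence interval. The set $L(w_0)$ is closed by continuity and bounded by hypothesis, hence compact. A solution confined to a compact set cannot blow up, so it extends to $[t_0,+\infty)$.

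\emph{Step 3: part (b).} $\mathcal{E}$ is continuous on the compact set $L(w_0)$, so it is bounded below there. Being monotone and bounded below, $\mathcal{E}(w(t),\beta)$ converges to some limit $\mathcal{E}_\infty$ as $t\to+\infty$. Integrating the identity from Step 2 gives
\begin{equation*}
\int_{t_0}^{\infty}\|\nabla\mathcal{E}(w(t),\beta)\|^2\,dt=\mathcal{E}(w_0,\beta)-\mathcal{E}_\infty<\infty .
\end{equation*}

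\emph{Step 4: part (a), the main step.} Because the integral is finite, $\liminf_{t\to\infty}\|\nabla\mathcal{E}(w(t),\beta)\|=0$. Otherwise $\|\nabla\mathcal{E}\|^2\ge c>0$ for all large $t$, and the integral would diverge. So I can choose a strictly increasing sequence $t_k\to\infty$ with $\|\nabla\mathcal{E}(w(t_k),\beta)\|\to 0$.

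The points $w(t_k)$ lie in the compact set $L(w_0)$. Passing to a subsequence, still called $t_k$, gives $w(t_k)\to w^*\in L(w_0)$. Continuity of $\nabla\mathcal{E}$ then yields $\nabla\mathcal{E}(w^*,\beta)=0$, so $w^*$ is an equilibrium of \eqref{eq:3}. Continuity of $\mathcal{E}$ gives $\mathcal{E}(w(t_k),\beta)\to\mathcal{E}(w^*,\beta)$. Since the whole function $\mathcal{E}(w(t),\beta)$ converges to $\mathcal{E}_\infty$, we get $\mathcal{E}_\infty=\mathcal{E}(w^*,\beta)$, which is (b).

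The only delicate point is that one cannot extract a sequence along which the gradient vanishes merely from the finiteness of the integral without the liminf argument. An alternative is to invoke LaSalle's invariance principle: the $\omega$-limit set is nonempty, compact and invariant, and is contained in $\{\nabla\mathcal{E}=0\}$. That route, however, requires uniqueness of solutions, so I prefer the direct liminf argument.
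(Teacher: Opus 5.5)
Your proof is correct, but it reaches part (a) by a different mechanism than the paper.

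The paper's route is as follows. It takes an arbitrary sequence $\bar t_k\to\infty$ and uses boundedness of $L(w_0)$ to extract a subsequence with $w(t_k)\to w^*$. It then invokes LaSalle's invariance principle: the trajectory approaches the largest invariant subset of $\{w\in L(w_0)\mid \nabla\mathcal{E}(w,\beta)=0\}$, so $w^*$ is an equilibrium. Part (b) then follows, essentially as in your Step 4, from monotonicity of $\mathcal{E}(w(t),\beta)$ combined with the convergent subsequence.

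Your route replaces LaSalle with the dissipation identity. You get $\int_{t_0}^{\infty}\|\nabla\mathcal{E}(w(t),\beta)\|^2\,dt<\infty$, choose times at which the gradient norm tends to zero, and pass to the limit using compactness and continuity of $\nabla\mathcal{E}$.

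What the paper's route buys is a formally stronger conclusion: every $\omega$-limit point of the trajectory is an equilibrium, not just one. What it costs is that the classical invariance principle presupposes uniqueness and continuous dependence of solutions. The paper's standing $C^1$ hypotheses on $f,g_j,h_i,G_k,H_k$ do not give this, because $\nabla\mathcal{E}$ is then only continuous. The paper also leaves global existence of the trajectory implicit, and it appeals to non-negativity of $\mathcal{E}$, which fails whenever $f$ takes negative values.

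What your route buys is rigor under the paper's actual hypotheses. It needs only continuity of $\nabla\mathcal{E}$, so Peano solutions suffice. It proves global existence from compactness of $L(w_0)$, and it uses only boundedness below of $\mathcal{E}$ on that compact set. It delivers exactly what the statement asserts.

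If you wanted the stronger conclusion without uniqueness, a short extra argument gives it. The speed $\|\dot w\|$ is bounded on $L(w_0)$, so every return of the trajectory near a non-critical limit point would dissipate a fixed amount of energy, contradicting boundedness below.

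One small correction: your remark that the LaSalle route requires uniqueness is slightly too strong. Without uniqueness, $\omega$-limit sets are still weakly invariant, and invariance-principle variants exist in that setting. Your proof does not rely on this remark, so nothing is affected.
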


\begin{proof}
(\textit{a}) Let us define the trajectory set
	$
	\mathcal{T}(w_0) = \{\, w(t, w_0) : t \ge t_0 \,\} 
$ initiated from the initial point $w_0.$ Then the time derivative of the energy function $\mathcal{E}(w, \beta)$ along any trajectory $w(t, w_0) \in \mathcal{T}(w_0)$ is given by
	$$
\left.\frac{d\mathcal{E}(w, \beta)}{dt}\right|_{w(t, w_0)} = \nabla \mathcal{E}(w, \beta)^{T} \frac{dw}{dt},
	$$
which together with the neural network \eqref{eq:3}, yields
\begin{align}
\left.\frac{d\mathcal{E}(w, \beta)}{dt}\right|_{w(t, w_0)}  = -\| \nabla \mathcal{E}(w, \beta) \|^2 \le 0, \label{eq:4}
\end{align}
which shows that $\mathcal{E}(w, \beta)$ decreases monotonically \textit{i.e.} non-increasing along the trajectory $w(t, w_0)$.
	Also, since $\mathcal{E}(w, \beta)$ is continuous,  thus the trajectory remains within the level set
	$
 L(w_0) = \{ w \in \mathbb{R}^n \mid \mathcal{E}(w, \beta) \leq \mathcal{E}(w_0, \beta) \},
	$ and we have
	$$\mathcal{T}(w_0) \subseteq  L(w_0).$$
Further, since	the function $\mathcal{E}(w, \beta)$ is bounded below and non-increasing, thus the trajectory $w(t, w_0)$ is also bounded.
	Thus, for any strictly increasing sequence $\{\bar{t}_k \}$ with $ \bar{t}_k \to +\infty$, 
	the sequence $\{ w( \bar{t}_k, w_0) \}$ is bounded  and  admits a limit point ${w}^*$. 
	Therefore, there exists a subsequence $\{ t_k \} \subseteq \{\bar{t}_k\}$ such that
$$
	\lim\limits_{k \to +\infty} w(t_k, w_0) = w^*.
$$
Now, the inequality \eqref{eq:4} together with  the non-negativity of the energy function $\mathcal{E}(w, \beta),$ exhibits that  $\mathcal{E}(w, \beta)$  is  a Lyapunov function, which satify
	\begin{equation}
	\frac{d}{dt} \mathcal{E}(w, \beta) = 0 \Leftrightarrow \nabla \mathcal{E}(w, \beta) = 0. \label{eq:a}
\end{equation}
	And thus, according to LaSalle’s invariance principle \cite{8}, any trajectory of the neural network \eqref{eq:3} starting in $L(w_0)$ 
	approaches the largest invariant set $\mathcal{Q}$ such that
$$
	\mathcal{Q} \subseteq \Big\{\, w \in L(w_0) \mid \frac{d}{dt}\mathcal{E}(w, \beta) = 0 \,\Big\}	= \{\, w \in L(w_0) \mid \nabla \mathcal{E}(w, \beta) = 0 \,\}.~~~(by ~\eqref{eq:a})
	$$
	Therefore, $w^* \in \mathcal{Q}$ and $\nabla \mathcal{E}(w^*, \beta) = 0$.
	Hence, $w^*$ is an equilibrium point of the neural network \eqref{eq:3}.\\
	(\textit{b}) Since $t_k \to +\infty$, for every $k \geq 0,$ there exists an integer $K$ such that $t_k > t$ whenever $k \geq K$.  
	From part (a), we know that $ \mathcal{E}[w(t), \beta]$  decreases monotonically, and therefore
	\begin{center}
	  $\mathcal{E}[w(t_k), \beta] \leq \mathcal{E}[w(t), \beta].$
		\end{center}
	As $k \to +\infty$, we obtain
	\begin{center}
$\lim_{k \to +\infty}	\mathcal{E}[w(t_k), \beta] = \mathcal{E}[w^*, \beta]\leq \mathcal{E}[w(t), \beta].
	$
\end{center}
	Thus, for every $\varepsilon > 0$, there exists an integer $K_0,$ such that
	$$
	\mathcal{E}[w(t_k), \beta] < \mathcal{E}[w^*, \beta]+\varepsilon, \quad \forall k \geq K_0.
	$$
	Also, we have
$$	\mathcal{E}[w^*, \beta] = \inf_{t \ge 0} \{\mathcal{E}[w(t), \beta]\}.
	$$
Hence,	it follows from the above two  relations
	$$
	\lim_{t \to +\infty} \mathcal{E}[w(t), \beta] = \lim_{k \to +\infty} \mathcal{E}[w(t_k), \beta] =\mathcal{E}[w^*, \beta].
	$$
	This completes the proof.
\end{proof}
\begin{Theorem}
Let the level set $
\mathcal{C}(w_0) $ is bounded and $w(t, w_0)$ be the trajectory of  neural network \eqref{eq:3} starting from the initial point $w_0.$  Further, let $w^{\beta}$ be a KKT-point to the problem NLP($\beta)$ such that  $w^{\beta}\to w^*~\textit{as}~ \beta \to 0,$ where $w^*$ is an equilibrium point for the considered neural network \eqref{eq:3}.  
Also, assume that there exists a  neighborhood $\mathcal{V} \subseteq \mathbb{R}^n$ around $w^*$ such that the energy function $\mathcal{E}(w, \beta)$ is   convex, for all $w \in \mathcal{V}$ and  MPCC-LICQ holds at $w^*.$\\
	Under these assumptions, the following results are true:
	\begin{enumerate}
		\item  [(\text{a})] The trajectory $w(t, w_0)$  converges to an optimal solution of the penalized problem \eqref{eq:1}, and thus solve the considered constraints optimization problem MPCC.
		\item [(\text{b})]The neural network \eqref{eq:3} exhibits  an asymptotic stability over the equilibrium point  $w^*.$
	\end{enumerate}
\end{Theorem}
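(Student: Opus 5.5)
The plan combines three earlier results: the previous theorem (existence of an equilibrium and convergence of the energy along trajectories), local convexity of $\mathcal{E}(\cdot,\beta)$ on $\mathcal{V}$, and the limiting results Theorem \ref{thm:3.1} and Theorem \ref{thm:2.1}. I read the level set $\mathcal{C}(w_0)$ as the set $L(w_0)$ of the preceding theorem, so that result applies directly.

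\textbf{Part (a).} By the preceding theorem, the trajectory $w(t,w_0)$ stays in the bounded set $L(w_0)$. It has a sequence $t_k\to\infty$ with $w(t_k,w_0)\to w^*$, where $\nabla\mathcal{E}(w^*,\beta)=0$, and $\mathcal{E}(w(t),\beta)\downarrow\mathcal{E}(w^*,\beta)$. Since $\mathcal{E}(\cdot,\beta)$ is differentiable and convex on $\mathcal{V}$, the convexity inequality gives
$$\mathcal{E}(w,\beta)-\mathcal{E}(w^*,\beta)\ge (w-w^*)^T\nabla\mathcal{E}(w^*,\beta)=0 \quad \text{for all } w\in\mathcal{V}.$$
Hence $w^*$ is a (local) minimizer of the penalized problem \eqref{eq:1}.

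To upgrade subsequential convergence to convergence of the whole trajectory, I would use the Lyapunov function $V(w)=\tfrac12\|w-w^*\|^2$ on $\mathcal{V}$. Monotonicity of $\nabla\mathcal{E}$ there (a consequence of convexity) gives
$$\dot V=-(w-w^*)^T\big(\nabla\mathcal{E}(w,\beta)-\nabla\mathcal{E}(w^*,\beta)\big)\le 0.$$
So once $w(t_k)$ enters a ball $B(w^*,r)\subseteq\mathcal{V}$, the distance $\|w(t)-w^*\|$ is nonincreasing for $t\ge t_k$. Since $\|w(t_k)-w^*\|\to0$ along the subsequence, it follows that $w(t)\to w^*$.

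Next I would link $w^*$ to the constrained problems. By Theorem \ref{thm:3.1}, as $\lambda\to\infty$, limit points of penalty minimizers solve NLP($\beta$), so $w^*$ is treated as the limit of KKT points $w^\beta$ of NLP($\beta$), with bounded multipliers. Boundedness of the multipliers comes from MPCC-LICQ, which persists near $w^*$ and so keeps the multiplier sequence bounded; passing to a subsequence, the multipliers converge. Theorem \ref{thm:2.1} then applies and gives that $w^*$ is M-stationary for MPCC. Feasibility of $w^*$ for MPCC follows from continuity: $\mathcal{B}_k(w^\beta,\beta)\le0$ together with $G_k,H_k\ge0$ yields $G_kH_k=0$ in the limit, by Lemma (i). Finally, I would argue optimality by combining local optimality in \eqref{eq:1} with feasibility: for feasible $w$ the penalty term vanishes, so $f(w^*)\le f(w)$ for feasible $w\in\mathcal{V}$.

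\textbf{Part (b).} I would use $V(w)=\mathcal{E}(w,\beta)-\mathcal{E}(w^*,\beta)$, which is nonnegative on $\mathcal{V}$ by the convexity inequality above and satisfies $\dot V=-\|\nabla\mathcal{E}\|^2\le0$. This gives Lyapunov stability. For attractivity, LaSalle's principle on a compact sublevel set inside $\mathcal{V}$ shows that trajectories approach $\{\nabla\mathcal{E}=0\}\cap\mathcal{V}$, which is the convex set of minimizers. Convergence to $w^*$ then follows from the distance-monotonicity argument of part (a).

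\textbf{Main obstacle.} The hardest step is bridging the equilibrium of \eqref{eq:3}, for fixed $\lambda$ and $\beta$, to a KKT point of NLP($\beta$) and then to MPCC. This requires taking the hypothesis $w^\beta\to w^*$ seriously and invoking Theorems \ref{thm:3.1} and \ref{thm:2.1} in the limit $\lambda\to\infty$, $\beta\to0$. A second, smaller gap is that mere convexity does not make $w^*$ an isolated equilibrium. For genuine asymptotic stability I would strengthen the local assumption to strict convexity near $w^*$, or else settle for convergence to the set of minimizers.
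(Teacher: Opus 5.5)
Your outline follows the paper's proof. For (a) you use a distance function $\tfrac12\|w-w^*\|^2$, made nonincreasing by convexity, together with Theorem~\ref{thm:2.1}; for (b) you use $\mathcal{E}(w,\beta)-\mathcal{E}(w^*,\beta)$. Your localization to a ball inside $\mathcal{V}$ is more careful than the paper's, which applies the convexity inequality along the whole trajectory.

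\textbf{Gap in (a).} The decisive step is unjustified, in your proof exactly as in the paper's. The preceding theorem only gives $w(t_k,w_0)\to$ \emph{some} equilibrium, and you identify it with the $w^*$ of the hypotheses, i.e.\ the limit of the $w^\beta$, around which $\mathcal{V}$ is taken. Reading $\mathcal{C}(w_0)$ as $L(w_0)$ does not supply this, and it is false. Take MPCC1 of Example~\ref{ex:1} with $w^*=(0,0)$ and fixed $\beta>0$:
\begin{itemize}
\item $w^\beta=(0,0)$ is a KKT point of NLP($\beta$), because $\nabla f(0)=0$ and $\mathcal{B}_1(0,\beta)=-\beta^2<0$;
\item $\nabla\mathcal{E}(0,\beta)=0$;
\item near $0$ one has $w_1+w_2<2\beta$, so the $\mathcal{B}$-penalty vanishes and $\mathcal{E}=(w_1-w_2)^2+\tfrac{\lambda}{2}\sum_i(\max\{0,-w_i\})^2$ is convex;
\item MPCC-LICQ holds, and $\mathcal{E}(\cdot,\beta)$ is coercive.
\end{itemize}
Yet from $w_0=(1,1)$ the flow stays on the diagonal with $\dot a=-\lambda(a-\beta)^3$. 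It converges to $(\beta,\beta)$, which is not MPCC-feasible, and Table~\ref{tab:1} shows exactly this.

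What you actually prove is conditional: if $w^*$ lies in the $\omega$-limit set of the trajectory (e.g.\ $w_0$ near $w^*$), then $w(t)\to w^*$ and $w^*$ is a \emph{local} minimizer of $\mathcal{E}(\cdot,\beta)$. That is all local convexity can give. The paper's \emph{optimal solution} rests on the unproved claim that a global minimizer $\bar w$ coincides with $w^*$.

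\textbf{The MPCC link.} Within this reading your feasibility argument is sound, and it is better than going through Theorem~\ref{thm:2.1}. MPCC-feasible points are feasible for NLP($\beta$), so the penalty and its gradient vanish at $w^*$. This forces $\nabla f(w^*)=0$, and local minimality of $\mathcal{E}$ then gives local optimality for MPCC directly. Two parts of your argument should be dropped:
\begin{itemize}
\item the detour through Theorem~\ref{thm:3.1}, which concerns $\lambda\to\infty$ while $\lambda$ is fixed here;
\item the claim that MPCC-LICQ bounds the NLP($\beta$) multipliers. For $k\in\mathcal{I}_{00}(w^*)$ one has $\nabla\mathcal{B}_k(w^\beta,\beta)\to 0$, so the multiplier of $\mathcal{B}_k\le 0$ may diverge. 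Only the combined coefficients of $\nabla G_k,\nabla H_k$ stay bounded, which is how Kanzow and Schwartz argue.
\end{itemize}

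\textbf{Part (b).} Your diagnosis is right, and (b) is actually false as stated. In the same example every $(a,a)$ with $0\le a\le\beta$ is an equilibrium, because $\nabla f=0$ on the diagonal and $\mathcal{B}_1<0$ for $a<\beta$. So $w^*=(0,0)$ is not asymptotically stable. The paper's proof breaks precisely at its assertions $\mathcal{Y}(w)>0$ and $\dot{\mathcal{Y}}<0$ for $w\neq w^*$.

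Your own argument for (b) also needs repair:
\begin{itemize}
\item $\mathcal{E}-\mathcal{E}(w^*)$ is only nonnegative, so stability should come from $\tfrac12\|w-w^*\|^2$. This function makes every ball $B(w^*,r)\subseteq\mathcal{V}$ forward invariant.
\item Use these balls as the compact sets for LaSalle, not sublevel sets of $\mathcal{E}$; those need not stay inside $\mathcal{V}$ when minimizers are not isolated.
\item Distance monotonicity then gives convergence to \emph{some} minimizer near $w^*$ (semistability), not to $w^*$. Your sentence claiming convergence to $w^*$ contradicts your own closing remark.
\end{itemize}
With strict convexity on a ball around $w^*$, the equilibrium is isolated and asymptotic stability does follow.
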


\begin{proof} (\textit{a}) 
	Let $\bar{w}$  be an optimal solution of  the function $\mathcal{E}(w, \beta)$  and $w^*$ be an equilibrium point of the  neural network \eqref{eq:3}. Now, consider the function $ \mathcal{L}: \mathbb{R}^n \to \mathbb{R}$ define by
	  $$
	  \mathcal{L}(w) = \frac{1}{2}\|w-\bar{w}\|^2.
	  $$
Then, the time derivative of $\mathcal{L}(w)$ along the trajectories of the neural network \eqref{eq:3}  is given by
	  \begin{equation}	\frac{d  \mathcal{L}(w)}{dt}=(w-\bar{w})^{T} \frac{dw}{dt} = -\,(w-\bar{w})^{T} \nabla \mathcal{E}(w, \beta). \label{eq:6}
	  \end{equation}
	  	Since, the energy function $\mathcal{E}(w, \beta)$ is continuous differentiable and  convex for all $w\in \mathcal{V}$, the equilibrium point $w^*$  of the neural network \eqref{eq:3} and optimal solution $\bar{w}$ of the function $\mathcal{E}(w, \beta)$  are coincide on $\mathcal{V}.$ \\
	 Again, using the convexity assumption of the function  $ \mathcal{E}(., \beta),$  we have 
	  \begin{equation*}
	  	\mathcal{E}(\bar{w}, \beta) -  \mathcal{E}(w, \beta)
	  	\;\geq\; (\bar{w}-w)^{T}\nabla  \mathcal{E}(w, \beta),  \forall ~ w \in \mathcal{V}, \label{eq:5}
	  \end{equation*}
	  which together with the relation \eqref{eq:6}, gives
	 \begin{equation}\frac{d  \mathcal{L}(w)}{dt} =- (w-\bar{w})^{T} \nabla \mathcal{E}(w, \beta)\leq	\mathcal{E}(\bar{w}, \beta) - \mathcal{E}(w, \beta)
	  \;\leq\; 0, \label{eq:b}
	\end{equation}
	  and thus we conclude that $\mathcal{L}(w)$ is a Lyapunov function of the neural network  \eqref{eq:3}.\\
	  Now, as per hypothesis, we define a bounded level set $\mathcal{C}(w_0)$ given by    
	  $$\mathcal{C}(w_0)=\{w\in \mathbb{R}^n~|~\mathcal{L}(w)\leq \mathcal{L}(w_0)\}.$$
	   	And thus, according to LaSalle’s invariance principle \cite{8}, any trajectory of the neural network \eqref{eq:3} starting in $\mathcal{C}(w_0)$ approaches the largest invariant set $\mathcal{S}$ such that
	  $$
	  \mathcal{S} \subseteq \{\, w \in \mathcal{C}(w_0) \;|\; \frac{d  \mathcal{L}(w)}{dt}  = 0 \,\}.
	  $$
	  Next, we characterize the invariant set $\mathcal{S}.$  So for
	  if, $w \in \mathcal{S} \Rightarrow\frac{d  \mathcal{L}(w)}{dt}  =	0  $, then from the inequality \eqref{eq:b}, we obtain
	  $$\mathcal{E}(\bar{w}, \beta) = \mathcal{E}(w, \beta),$$
	  which yields $w$ is an optimal solution of the function  $\mathcal{E}(w, \beta)$ and thus equilibrium point of the neural network \eqref{eq:3} and we have $\nabla\mathcal{E}(w, \beta)=0.$\\ 
	  Conversely, if $\nabla\mathcal{E}(w, \beta)=0 \Rightarrow \frac{dw}{dt} =0,$ and consequently from the equality \eqref{eq:6}, we have $\frac{d  \mathcal{L}(w)}{dt} =0,$ and hence $ \frac{d  \mathcal{L}(w)}{dt}=0 \Leftrightarrow \nabla\mathcal{E}(w, \beta)=0,$ and it follows that
	  $$
	  \mathcal{S}  \subseteq \{\, w \in \mathcal{C}(w_0) \;|\; \nabla\mathcal{E}(w, \beta)=0 \,\}.
	  $$	
	 Since $w^*$ is an equilibrium point of the neural network  \eqref{eq:3}, and thus will be the limiting point of  any trajectory  contained in the invariant set $\mathcal{S}.$ Therefore,  $w^* \in \mathcal{S},$ and thus an optimal solution of the function $\mathcal{E}(w, \beta).$\\
	 Next, in order to show that  $ \lim\limits_{t \to +\infty} w(t) =\bar{w}, $ we consider the following Lyapunov function 
	 $$
	 \mathcal{\hat{L}
	 }(w) = \frac{1}{2}\|w-w^*\|^2.
	 $$
	 Proceeding on the  similar way as of  $\mathcal{L}(w),$  one can easily verify that $\mathcal{\hat{L}}(w)$ is a non-increasing function along the trajectory of the neural network \eqref{eq:3}.
	 Therefore, for all $\epsilon>0,$ there exists $\hat{l}>0,$ such that
	 $$\mathcal{\hat{L}}(w(t))<\mathcal{\hat{L}}(w(t_{\hat{l}}))<\epsilon,~  \forall  t>t_{\hat{l}}, $$
	 $$  \Rightarrow  \frac{1}{2}\|w-w^*\|^2<\epsilon, ~  \forall  t>t_{\hat{l}}.$$
	It follows that
	  $$ \lim\limits_{t \to +\infty} w(t) =w^*=\bar{w}.$$
	  Hence, the trajectory $w(t, w_0)$  converges to an optimal solution $w^*(=\bar{w})$  of the penalized problem \eqref{eq:1}. Further, since  $w^{\beta}$ is a KKT-point to the problem NLP($\beta)$ such that  $w^{\beta}\to w^*~\textit{as}~ \beta \to 0$ and MPCC-LICQ holds at $w^*,$ it follows from Theorem \ref{thm:2.1}, that $w^*$ solves the problem MPCC.\\
	(\textit{b}) Let 
	$
	\mathcal{Z}(w_0) = \{\, w(t, w_0) : t \ge t_0 \,\} 
	$ be the set of trajectories starting from the initial point  $w_0$ of the neural network \eqref{eq:3}, and $\mathcal{E}(w, \beta)$ be an energy function along any trajectory $w(t, w_0) \in \mathcal{Z}(w_0)$. Proceeding on the similar lines of Theorem 4.1 (a), one has $\mathcal{E}(w, \beta)$ decreases monotonically. 	
	 Now, consider the function
	\begin{equation*}
		\mathcal{Y}(w) = \mathcal{E}(w, \beta) -\mathcal{E}(w^*, \beta), ~ \forall w \in \mathcal{V}.
	\end{equation*}
Since, the function $\mathcal{E}(w, \beta)$  decreases monotonically and $w^*$ is an optimal point of the function $\mathcal{E}(w, \beta),$ we have 
	$$\mathcal{Y}(w) > 0, ~\forall  w \neq w^*.$$ Also, $ \mathcal{Y}(w^*) = 0,$ and thus \( \mathcal{Y}(w) \) 
	is a positive definite function defined on $\mathcal{V}.$ Moreover, for any initial 
	point $ w_0,$ there exists a trajectory $w(t, w_0) $ of the neural network \eqref{eq:3} such that
	\begin{equation*}
		\left.\frac{d}{dt}\mathcal{Y}(w)\right|_{w(t, w_0)} = \nabla \mathcal{Y}(w)^{T} \frac{dw}{dt} 
		= -\|\nabla \mathcal{E}(w, \beta)\|^{2} < 0, ~ \forall w \neq w^*.
	\end{equation*}
		Since, the function $ \mathcal{Y}(w) $ is positive definite on $\mathcal{V}$ and its time derivative strictly negative for all  $w \neq w^*.$  Thus,
	 according to Lyapunov stability theorem \cite{40}, the neural network \eqref{eq:3} is asymptotically stable over the equilibrium point $w^*.$
This completes the proof.
\end{proof}
	\section{Computer Simulations}
In this section, we present  computer simulations for the proposed neural network. All the numerical experiments are carried out on a personal computer utilizing MATLAB R2025b. Further, the system of ordinary differential equations was integrated leveraging the ode45 solver, and random initial points are generated using MATLAB’s randi function. The results of the numerical experiments demonstrate the effectiveness and reliability of the proposed neural network model to address the MPCC problem.
	\begin{Example}\cite{38}\label{ex:1} Consider the  problem:
	$$
	\begin{aligned}
 \text{(MPCC1)}\quad & \min && w^T Qw \\
		&\text{subject to} && 0 \le w_1 \perp w_2 \ge 0,
	\end{aligned}
	$$
where		$
	Q = 
	\begin{bmatrix}
		1 & -1 \\
		-1 & 1
	\end{bmatrix}
	$
and $w = (w_1, w_2) \in \mathbb{R}^2.$ 
\par 
In order to  solve the problem MPCC1, we employ the proposed neural network \eqref{eq:3}. Using the above mentioned mathematical tools,  the numerical experiments with 10 random initial points,  shows that the solution trajectories $w_1(t)$ and $w_2(t)$ of the  neural network \eqref{eq:3}   converge  to the global minimum point  $(0,0)$ with the parameters $\lambda=10^{6}$ and $\beta=0.0001,$ as depicted  in the figures \eqref{fig:Fig4a} and \eqref{fig:Fig4b}. 
 \begin{figure}[h]
 	\centering
 	\begin{subfigure}[b]{0.49\linewidth}
 		\centering
 		\includegraphics[width=\linewidth]{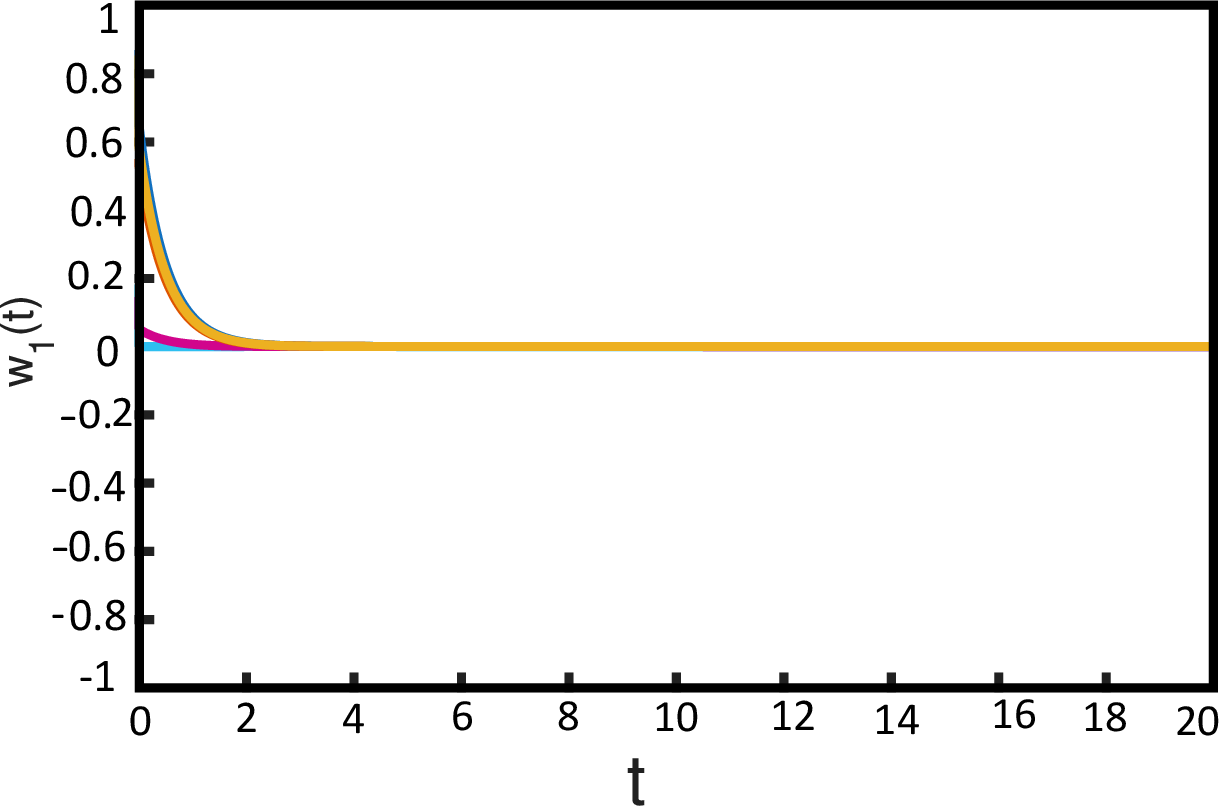}
 		\captionsetup{labelformat=empty}  
 		\caption{Figure (4a) : Transient behaviors of $w_1(t).$}
 		\label{fig:Fig4a}
 	\end{subfigure}
 	\hfill
 	\begin{subfigure}[b]{0.49\linewidth}
 		\centering
 		\includegraphics[width=\linewidth]{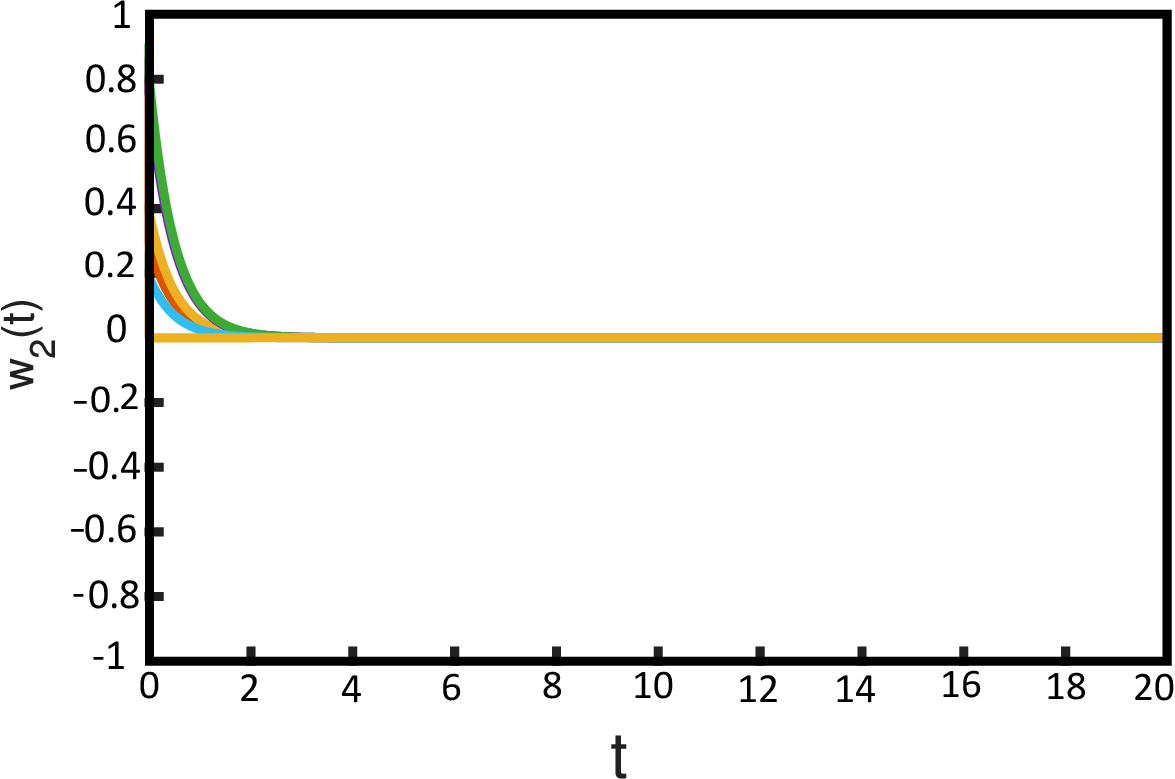}  
 		\captionsetup{labelformat=empty}
 		\caption{Figure (4b): Transient behaviors of $w_2(t).$}
 		\label{fig:Fig4b}
 	\end{subfigure}
 	
 \end{figure}

\par
Now, with fixed $\lambda = 10^{6},$ we examine the transient behavior of state variable $w(t)$ having initial point $w_{0} = (1.0, 1.0)$ of the neural network \eqref{eq:3} with decreasing value of the parameter  $\beta.$ It is observe that as $\beta$ approaches to zero,  the solution trajectories of the neural network get closer to the global minimum point (0,0) of the  problem  MPCC1 as exhibited in the Figures \eqref{fig:Fig5a} and \eqref{fig:Fig5b}. Further, the corresponding numerical results are summarized in Table \ref{tab:1}, which reports the approximate optimal solutions $(w_{1}, w_{2})$, their objective values $f(w_{1}^{}, w_{2}^{})$, and the associated absolute errors.
 \begin{figure}[h]
	\centering
	\begin{subfigure}[b]{0.49\linewidth}
		\centering
		\includegraphics[width=\linewidth]{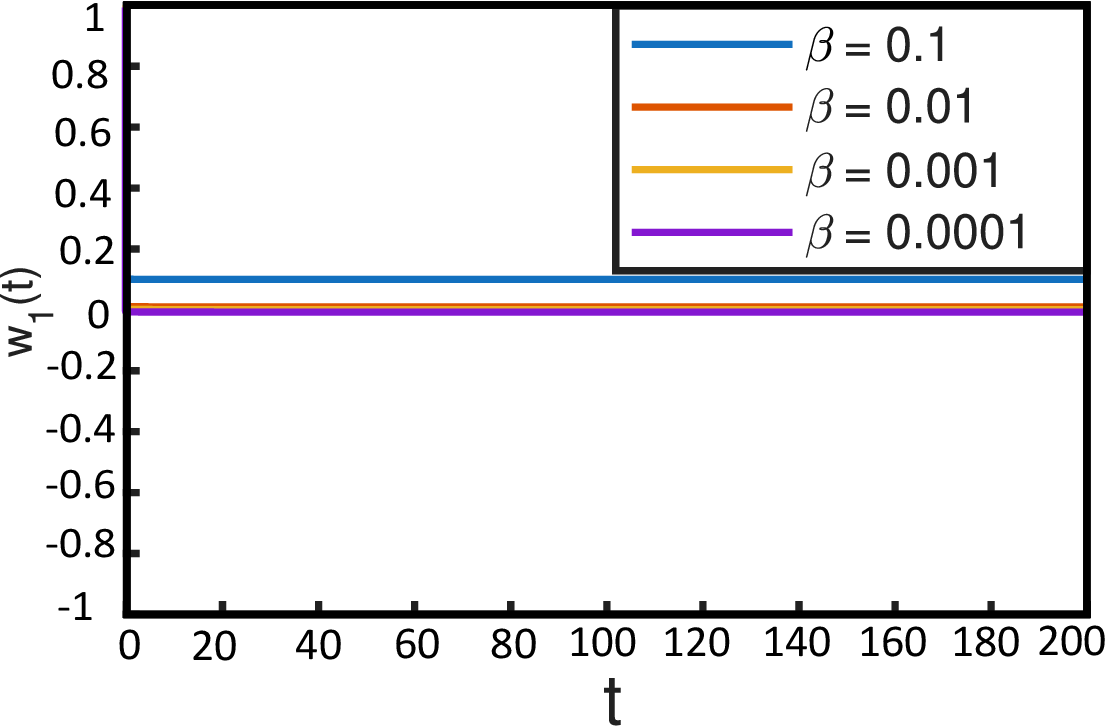}
		\captionsetup{labelformat=empty}  
		\caption{Figure (5a) : Transient behaviors of $w_1(t)$ with fixed $\lambda$.}
		\label{fig:Fig5a}
	\end{subfigure}
	\hfill
	\begin{subfigure}[b]{0.49\linewidth}
		\centering
		\includegraphics[width=\linewidth]{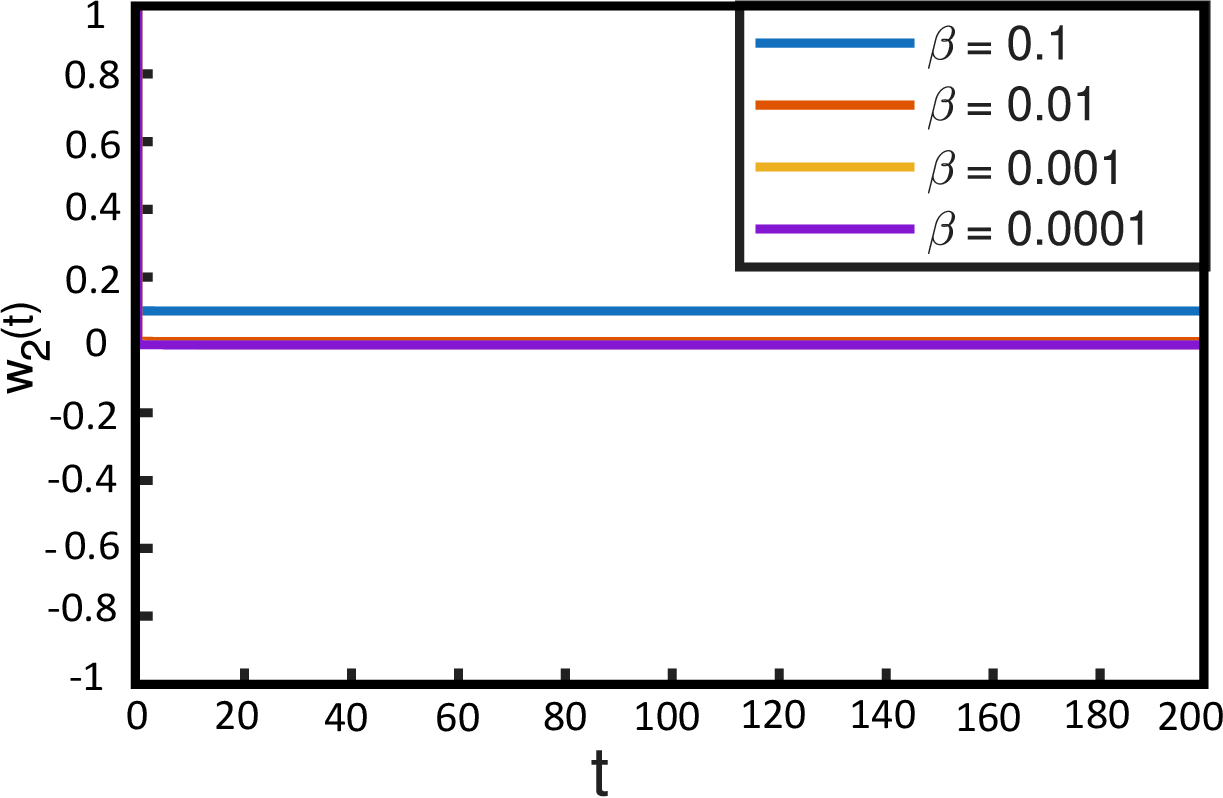}  
		\captionsetup{labelformat=empty}
		\caption{Figure (5b): Transient behaviors of $w_2(t)$ with fixed $\lambda$.}
		\label{fig:Fig5b}
	\end{subfigure}
\end{figure}
	\begin{table}[h!]
			\hspace*{-0.9cm}
	\centering
	\setlength{\tabcolsep}{1pt}
	\renewcommand{\arraystretch}{1.0}
	\begin{tabular}{|c|c|c|c|}
		\hline
		\textbf{$\beta$} & \makecell{Optimal solution\\[-0.7ex] $(w_1^*, w_2^*)$} & \makecell{Optimal value \\[-0.7ex] $f(w_1^*, w_2^*)$}
		& Absolute Error \\
		\hline
		0.1 & (0.100050000140542,   0.100050000140542) & 0 & 0  \\
		\hline
		0.01 & (0.0100500000194718,   0.0100500000194718 ) & 0 & 0 \\
		\hline
		0.001 & (0.00105000001965199,  0.00105000001965199) & 0 & 0 \\
		\hline
		0.0001 & (0.000150000019335945,   0.000150000019335945) & 0 & 0 \\
		\hline
	\end{tabular}
	\caption{}
	\label{tab:1}
\end{table}
\par
Next, we investigate the affects of  $\lambda$ on the convergence behavior of the state variable $w(t)$ having initial point $w_{0} = (1.5, 1.5)$ with fixed $\beta = 0.001.$ With increasing value of the parameter  $\lambda$, it is observe that the solution trajectories get closer to the global minimum point (0,0) as exhibited in the Figures \eqref{fig:Fig6a} and \eqref{fig:Fig6b}.  The corresponding numerical results are summarized in Table \ref{tab:2}.

\begin{figure}[h]
	\centering
	\begin{subfigure}[b]{0.46\linewidth}
		\centering
		\includegraphics[width=\linewidth]{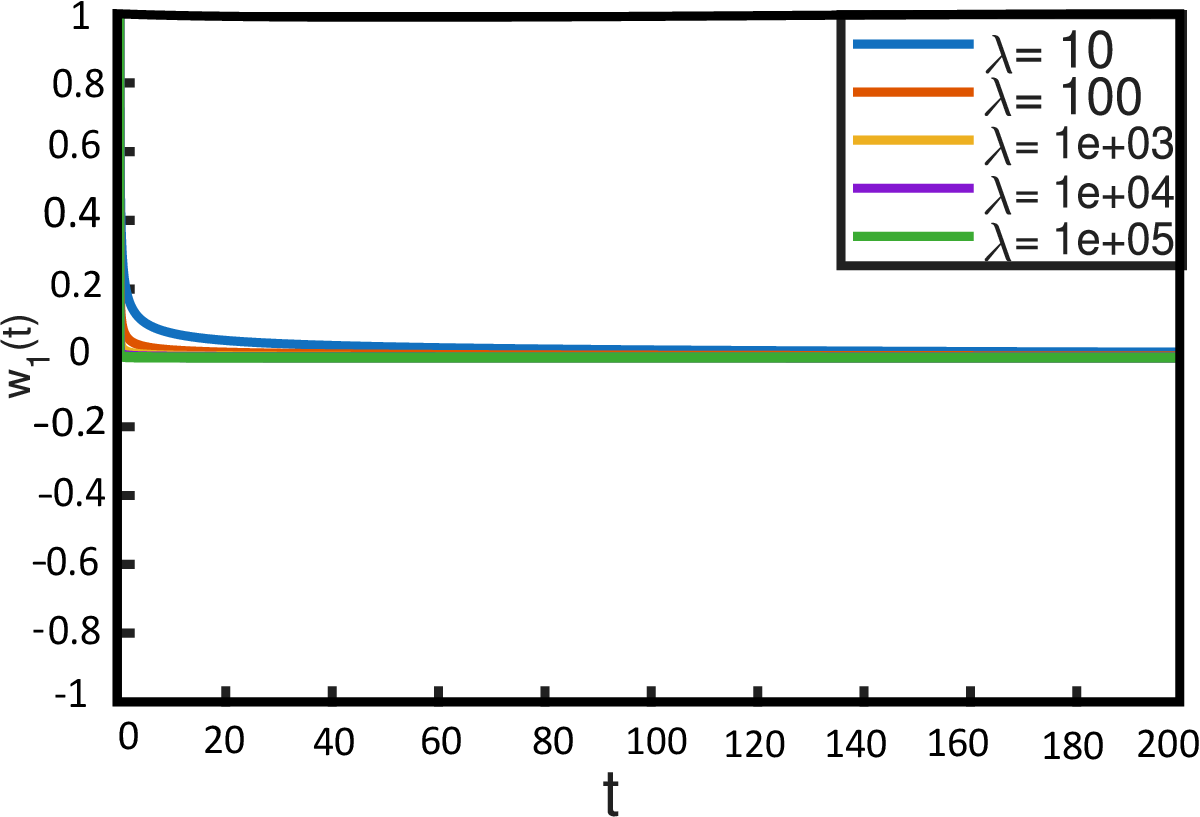}
		\captionsetup{labelformat=empty}  
		\caption{Figure (6a) :  Transient behaviors of $w_1(t)$ with fixed $\beta.$}
		\label{fig:Fig6a}
	\end{subfigure}
	\hfill
	\begin{subfigure}[b]{0.49\linewidth}
		\centering
		\includegraphics[width=\linewidth]{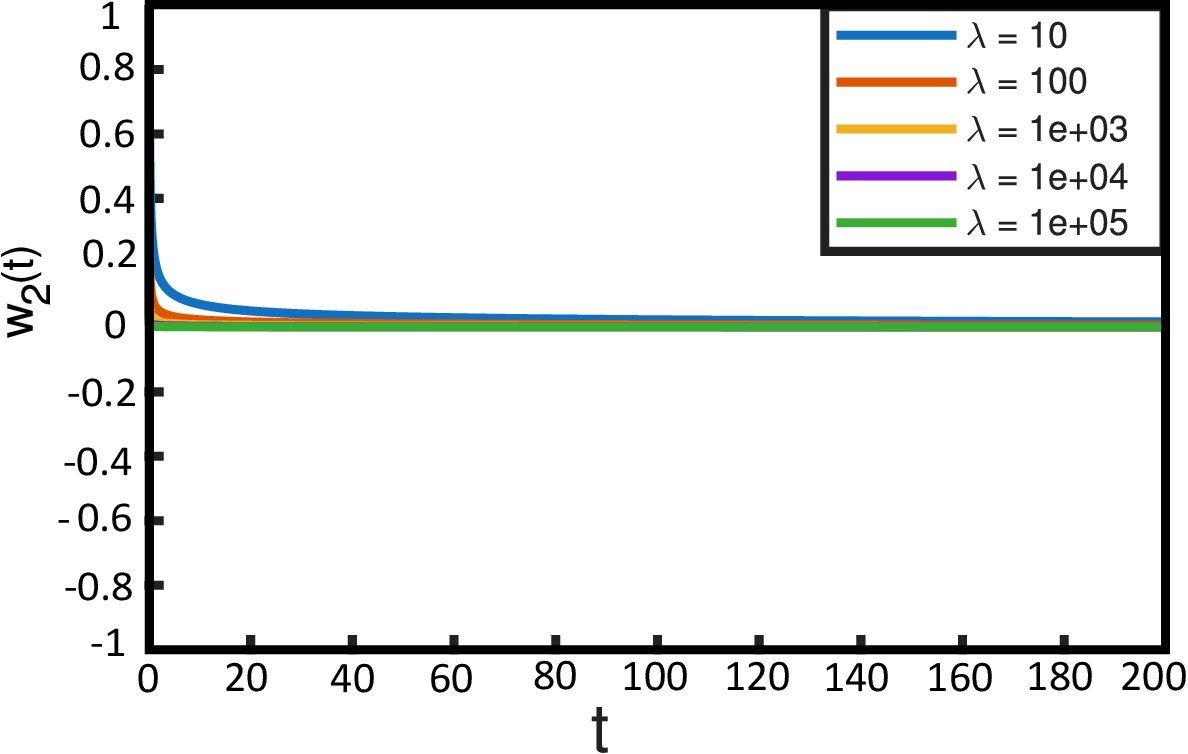}  
		\captionsetup{labelformat=empty}
		\caption{Figure (6b): Transient behaviors of $w_2(t)$ with fixed $\beta$.}
		\label{fig:Fig6b}
	\end{subfigure}
\end{figure}
\begin{table}[h!] 
		\hspace*{-0.9cm}
	\centering
	\setlength{\tabcolsep}{1pt}
	\renewcommand{\arraystretch}{1.0}
	\begin{tabular}{|c|c|c|c|}
		\hline
		\textbf{$\lambda$} & \makecell{Optimal solution\\[-0.7ex] $(w_1^*, w_2^*)$} & \makecell{Optimal value \\[-0.7ex] $f(w_1^*, w_2^*)$}
		& Absolute Error \\
		\hline
		10 & (0.0168105088432284,   0.0168105088432284) & 0 & 0 \\
		\hline
	100 & (0.00599997220798751   0.00599997220798751) & 0 & 0 \\
		\hline
		1000 & (0.00258113797276159   0.00258113797276159) & 0 & 0 \\
		\hline
		10000 & (0.00149999999698513   0.00149999999698513) & 0 & 0 \\
		\hline
		100000 & (0.00115811390460986,   0.00115811390460986) & 0 & 0\\
		\hline
	\end{tabular}
	\caption{}
	\label{tab:2}
\end{table}
\end{Example} 
\begin{Example}\cite{39} \label{ex:4}	Consider the  problem:
	$$
	\begin{aligned}
	\text{(MPCC3)} \quad	\min\quad & (w_1 - 1)^2 + \left(w_2 - \tfrac{1}{2}\right)^2 \\
		\text{subject to}\quad 
		& w_1 \le 1,~w_2 \ge 0,\\
		& 0 \le 2w_1 + w_2  \perp 2 - (w_1 - 1)^2 - (w_2 - 1)^2 \ge 0 .
	\end{aligned}
	$$
	\par In order to  solve the problem MPCC3, we employ the proposed neural network \eqref{eq:3}. Using the above mentioned mathematical tools,  the numerical experiments with 10 random initial points,  shows that the solution trajectories $w_1(t)$ and $w_2(t)$ of the  neural network \eqref{eq:3}   converge  to the global minimum point  $(0,0)$ with the parameters $\lambda=10^{6}$ and $\beta=0.00001$ as depicted  in the figures \eqref{fig:Fig13a} and \eqref{fig:Fig13b}. 
	\begin{figure}[h]
		\centering
		\begin{subfigure}[b]{0.49\linewidth}
			\centering
			\includegraphics[width=\linewidth]{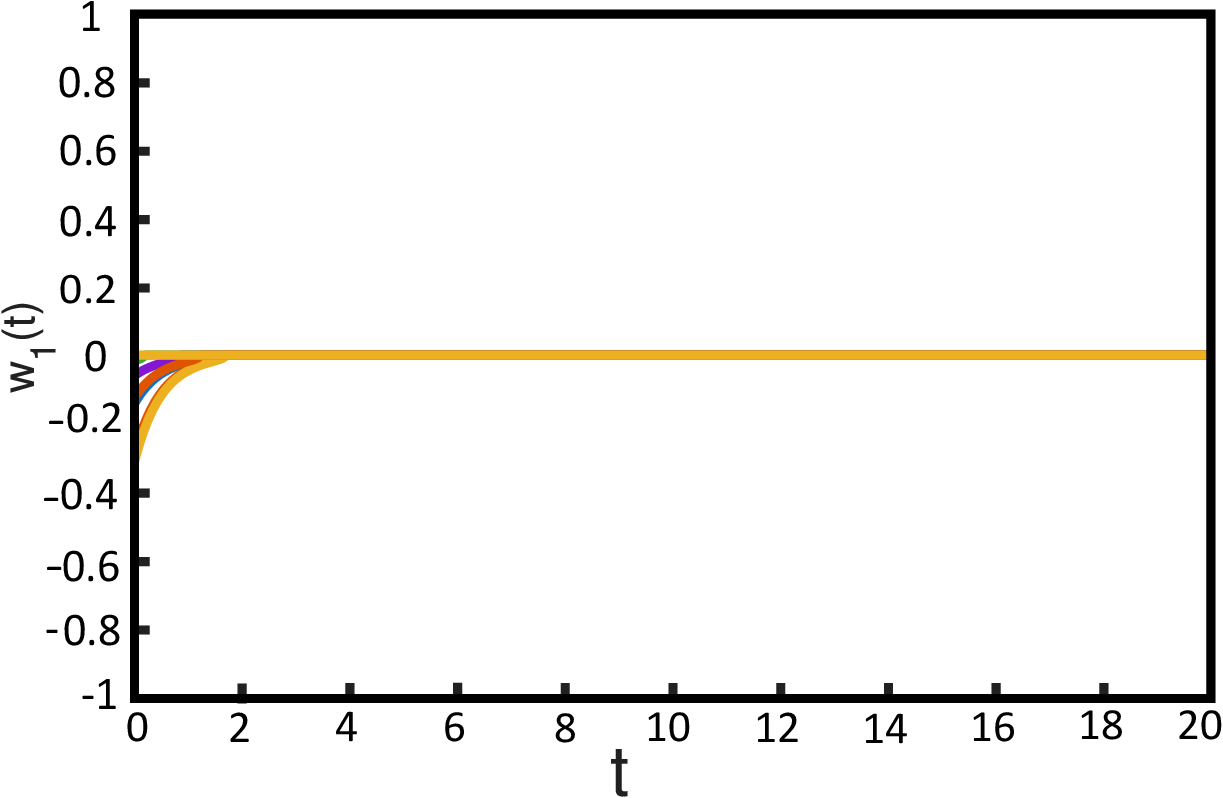}
			\captionsetup{labelformat=empty}  
			\caption{Figure (7a) : Transient behaviors of $w_1(t).$}
			\label{fig:Fig13a}
		\end{subfigure}
		\hfill
		\begin{subfigure}[b]{0.49\linewidth}
			\centering
			\includegraphics[width=\linewidth]{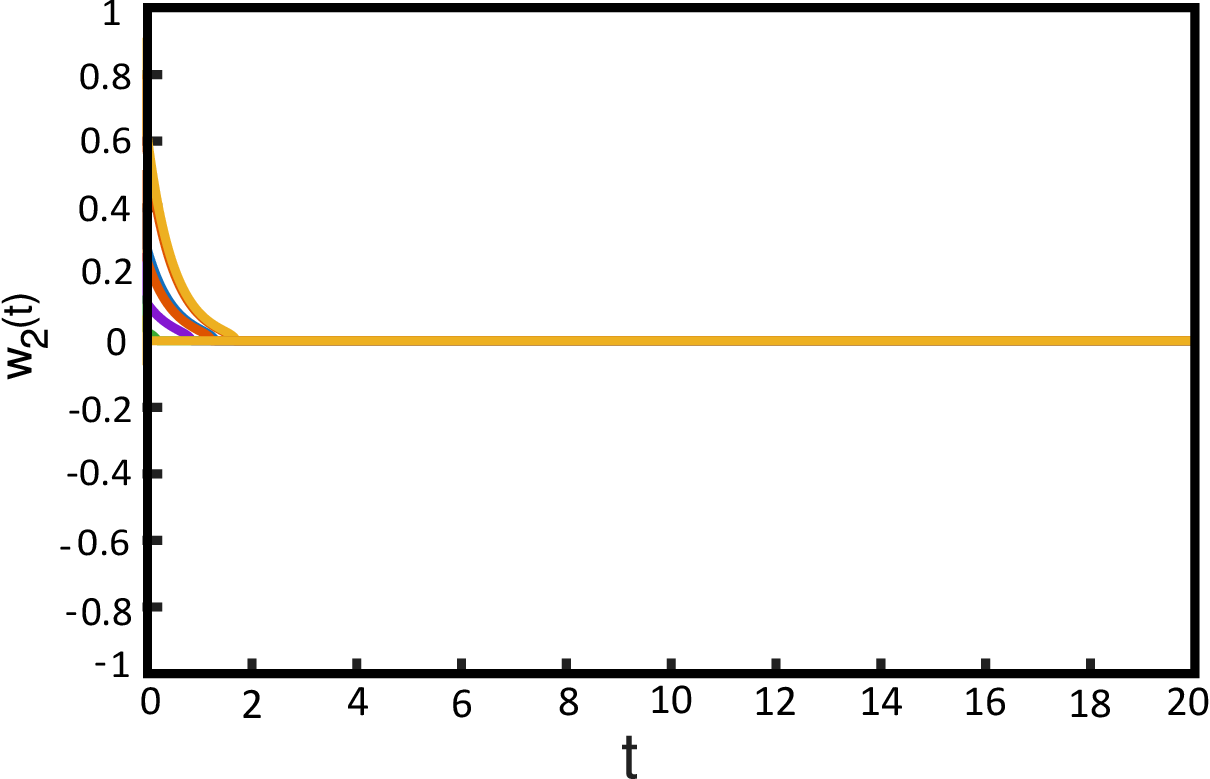}  
			\captionsetup{labelformat=empty}
			\caption{Figure (7b): Transient behaviors of $w_2(t).$}
			\label{fig:Fig13b}
		\end{subfigure}
	\end{figure} 
	\par Now, with fixed $\lambda = 10^{6},$ we examine the transient behavior of state variable $w(t)$ having initial point $w_{0} = (1.0, 1.0)$ of the neural network \eqref{eq:3} with decreasing value of the parameter  $\beta.$ It is observe that as $\beta$ approaches to zero,  the solution trajectories of the neural network get closer to the global minimum point (0,0) of the  problem  MPCC3 as exhibited in the Figures  \eqref{fig:Fig14a} and \eqref{fig:Fig14b}. Further, the corresponding numerical results are summarized in Table \ref{tab:7}, which reports the approximate optimal solutions $(w_{1}, w_{2})$, their objective values $f(w_{1}^{}, w_{2}^{})$, and the associated absolute errors. 
	\par
	\begin{figure}[h]
		\centering
		\begin{subfigure}[b]{0.49\linewidth}
			\centering
			\includegraphics[width=\linewidth]{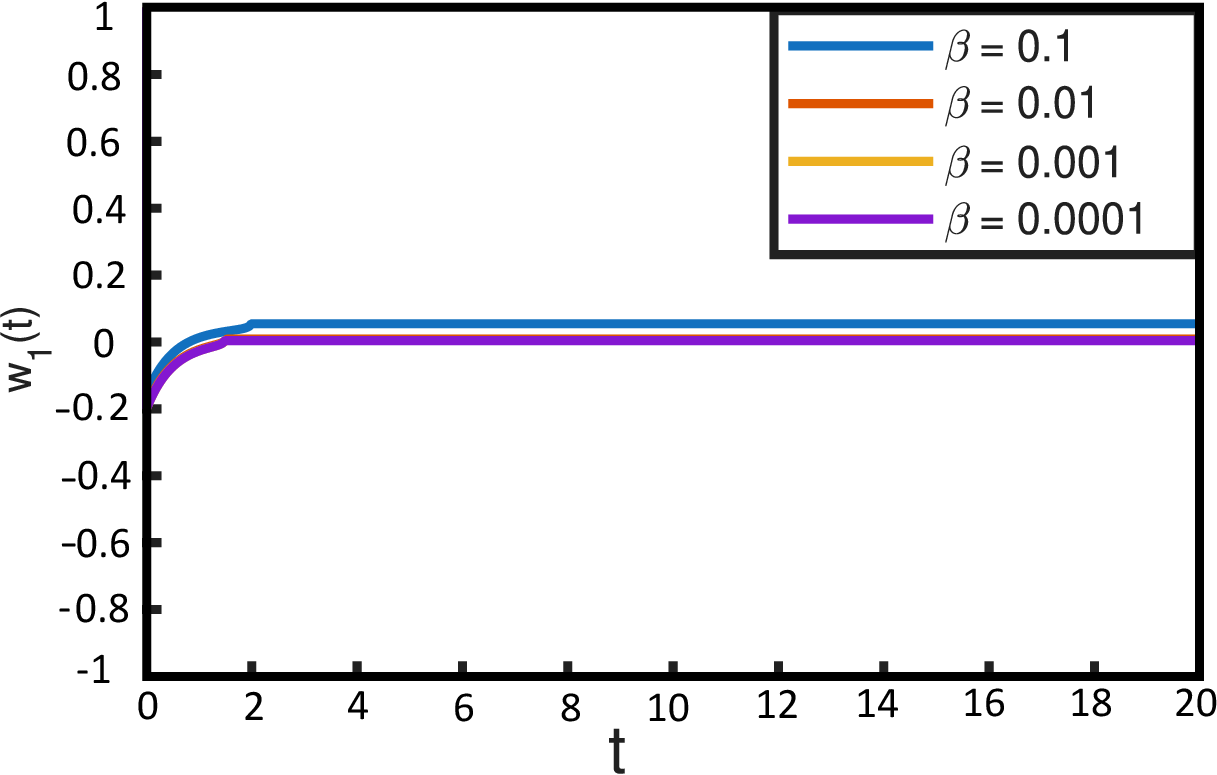}
			\captionsetup{labelformat=empty}  
			\caption{Figure (8a) : Transient behaviors of $w_1(t)$ with fixed $\lambda.$}
			\label{fig:Fig14a}
		\end{subfigure}
		\hfill
		\begin{subfigure}[b]{0.49\linewidth}
			\centering
			\includegraphics[width=\linewidth]{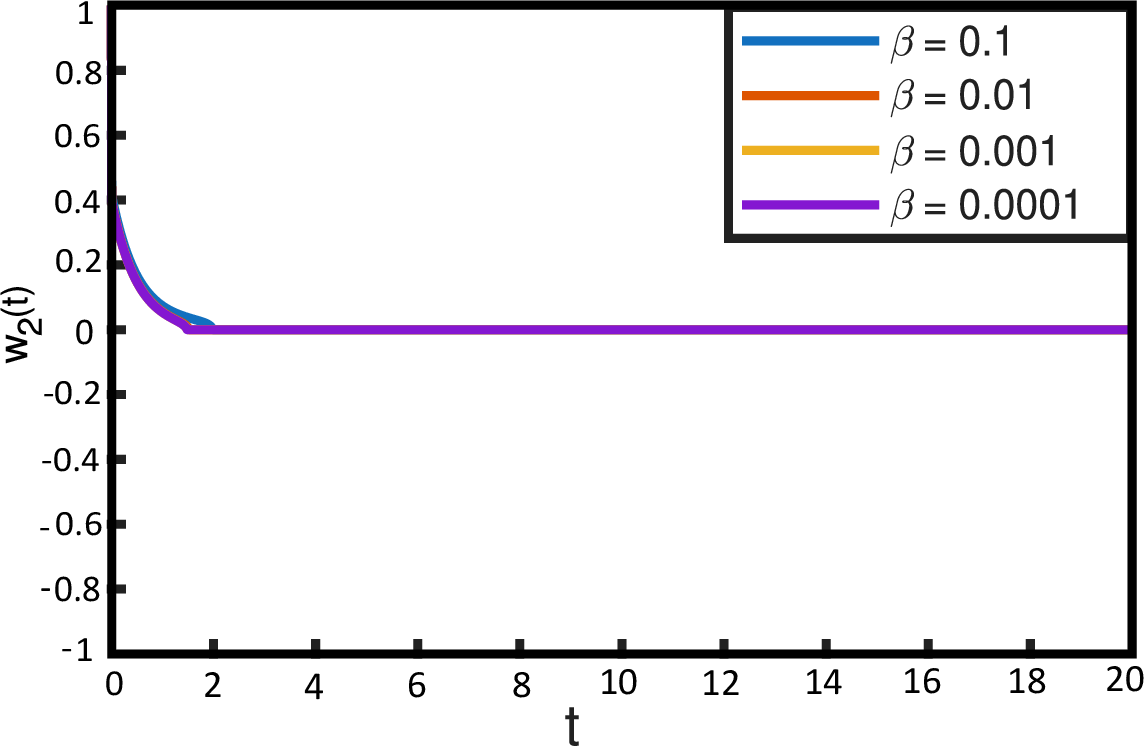}  
			\captionsetup{labelformat=empty}
			\caption{Figure (8b): Transient behaviors of $w_2(t)$ with fixed $\lambda.$}
			\label{fig:Fig14b}
		\end{subfigure}
	\end{figure}
	\begin{table}[h!]
			\hspace*{-1cm}
		\centering
		\setlength{\tabcolsep}{1pt}
		\renewcommand{\arraystretch}{1.0}
		\begin{tabular}{|c|c|c|c|}
			\hline
			\textbf{$\beta$} & \makecell{Optimal solution\\[-0.7ex] $(w_1^*, w_2^*)$} & \makecell{Optimal value \\[-0.7ex] $f(w_1^*, w_2^*)$}
			& Absolute Error \\
			\hline
			0.1 & (0.0547347728730422,   -5.57362581295502e-07) & 1.14352690697827 & 0.106473093021729 \\
			\hline
			0.01 & 0.00898313645704711,  -4.95784188958737e-07) & 1.23211491961095 & 0.0178850803890533 \\
			\hline
			0.001 & (0.00447091291113772,   -4.97282434599503e-07) & 1.24107866052267 & 0.00892133947733464 \\
			\hline
			0.0001 & (0.00402025441315761,   -4.97464725953594e-07) & 1.2419761510842  & 0.00802384891579511 \\
			\hline
		\end{tabular}
		\caption{}
		\label{tab:7}
	\end{table}
	\par Next, we investigate the affects of  $\lambda$ on the convergence behavior of the state variable $w(t)$ having initial point $w_{0} = (1.0, 1.0)$ with fixed  $\beta = 0.000001.$ With increasing value of the parameter  $\lambda$, it is observe that the solution trajectories get closer to the global minimum point (0,0) as exhibited in the Figures \eqref{fig:Fig15a} and \eqref{fig:Fig15b}.  The corresponding numerical results are summarized in Table \ref{tab:8}.
	\begin{figure}[h]
		\centering
		\begin{subfigure}[b]{0.49\linewidth}
			\centering
			\includegraphics[width=\linewidth]{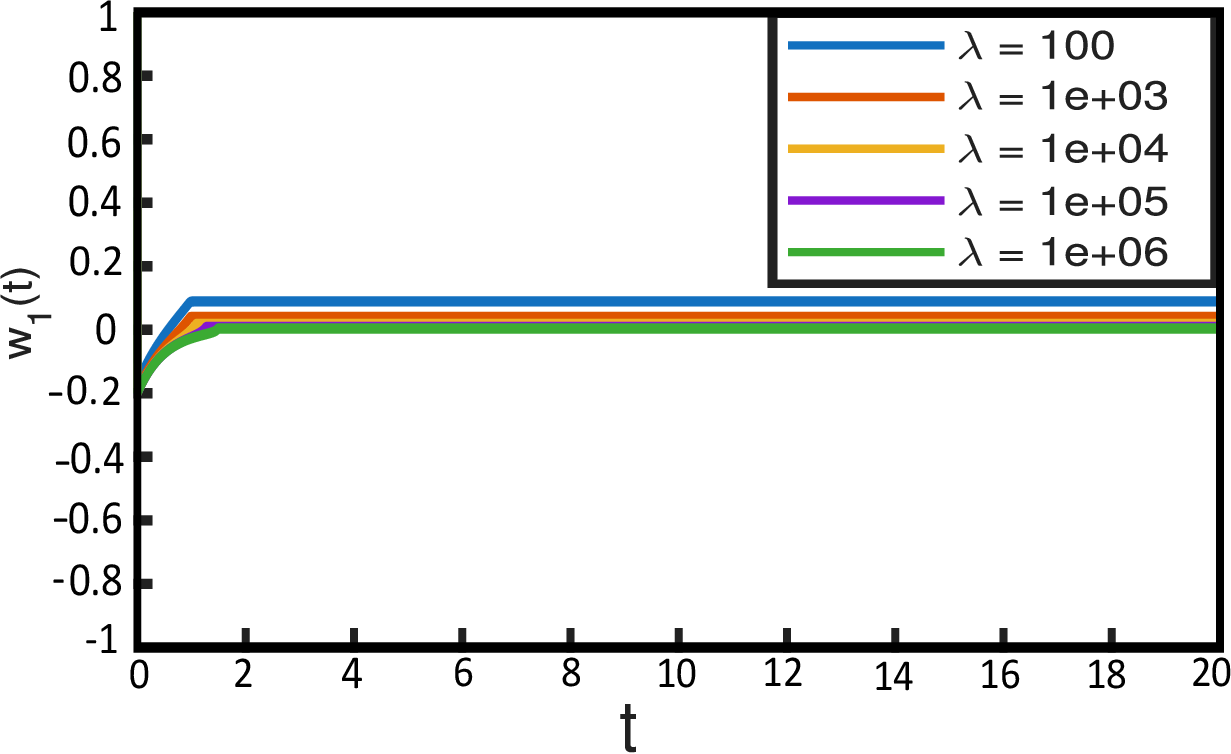}
			\captionsetup{labelformat=empty}  
			\caption{Figure (9a) : Transient behaviors of $w_1(t)$ with fixed $\beta.$}
			\label{fig:Fig15a}
		\end{subfigure}
		\hfill
		\begin{subfigure}[b]{0.46\linewidth}
			\centering
			\includegraphics[width=\linewidth]{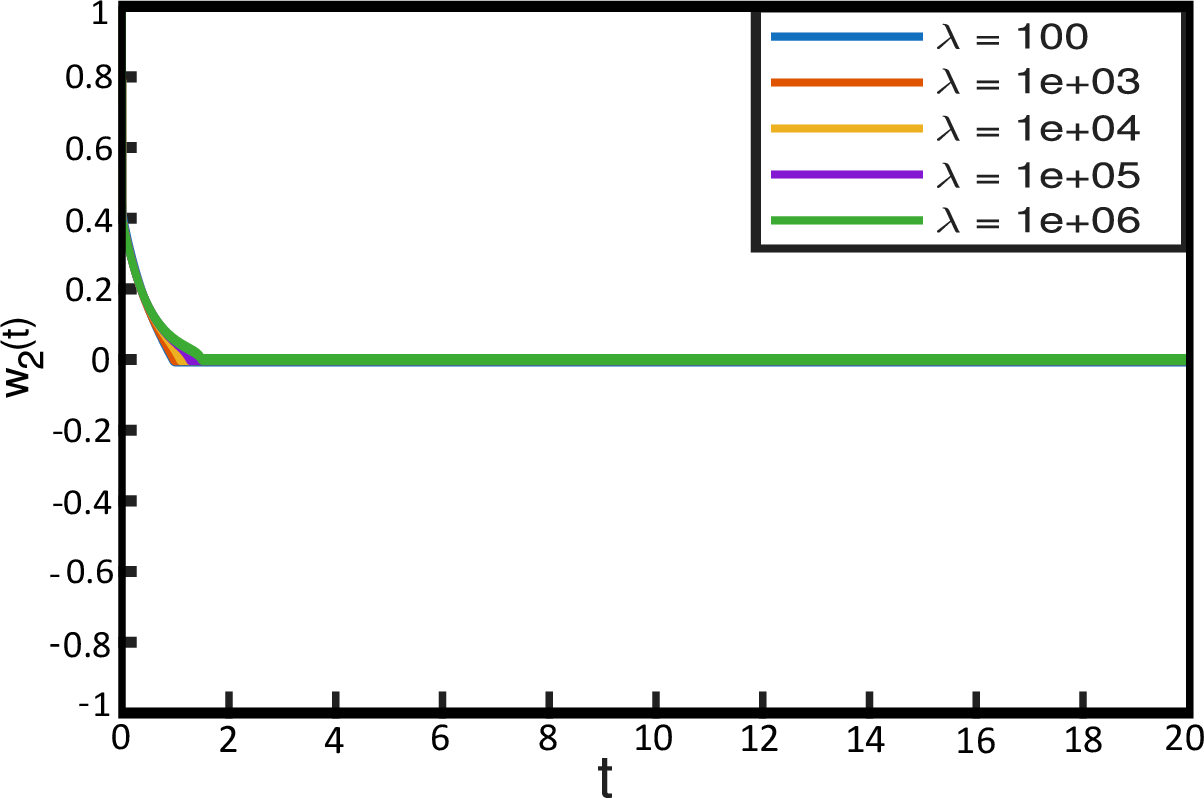}  
			\captionsetup{labelformat=empty}
			\caption{Figure (9b): Transient behaviors of $w_2(t)$ with fixed $\beta.$}
			\label{fig:Fig15b}
		\end{subfigure}
	\end{figure}
	\begin{table}[h!] 
			\hspace*{-1cm}
		\centering
		\setlength{\tabcolsep}{1pt}
		\renewcommand{\arraystretch}{1.0}
		\begin{tabular}{|c|c|c|c|}
			\hline
			\textbf{$\lambda$} & \makecell{Optimal solution\\[-0.7ex] $(w_1^*, w_2^*)$} & \makecell{Optimal value \\[-0.7ex] $f(w_1^*, w_2^*)$}
			 & Absolute Error \\
			\hline
			100 & (0.0895849855530089,   -0.00445364859562869) & 1.08332898211196 & 0.166671017888043 \\
			\hline
			1000 & (0.0401832807892819,   -0.000475660458775592) & 1.17172402118807 &  0.0782759788119258 \\
			\hline
			10000 & (0.0184902830395593,   -4.88662867458115e-05) & 1.21341019316242 & 0.0365898068375758 \\
			\hline
			100000 & (0.00856469689824011,   -4.94705982203277e-06) & 1.23294890732077 & 0.017051092679226\\
			\hline
			1000000 & ( 0.00397519417035708,   -4.97510345691846e-07) & 1.24206591133857 & 0.00793408866142897 \\
			\hline
		\end{tabular}
		\caption{}
		\label{tab:8}
	\end{table}

	\newpage Now, we   present  comparative study between the proposed neural network \eqref{eq:3} and the  penalty algorithms ($l_1$, lower-order, and quadratic) for MPCCs reported in 	Yang and Huang \cite{27}.  The comparison is conducted to evaluate the effectiveness of the proposed method in terms of solution numerical accuracy and solution reliability.
	\begin{Example}\cite{27}\label{ex:5} 	Consider the  problem:
		$$
		\begin{aligned}
				\text{(MPCC4)} \quad	\min\quad & (w_1 + 1)^2 + (w_2 - 2.5)^2 + (w_3 + 1)^2 \\
				\text{subject to}~
			&w_1 \ge 0,~w_3 \ge 0, ~ -e^{w_1} + w_2 - e^{w_3} \ge 0,\\
			& w_1\bigl(-e^{w_1} + w_2 - e^{w_3}\bigr) = 0.
		\end{aligned}
		$$
		\par The transient  behaviors  of the state variables $w_1(t), w_2(t)$ and $w_3(t)$ at  $ \beta = 0.01$ and $\lambda=10^6,$  with 10 random  initial points are exhibited in the Figures~\eqref{fig:Fig16a}, \eqref{fig:Fig16b} and \eqref{fig:Fig16c}, respectively. Using the above mentioned  mathematical tools, the computer simulation shows that  the trajectories of the  neural network \eqref{eq:3}  converges to  approximate optimal solution $w^*=(-0.000002, 2.500000,-0.000002)$ of the problem MPCC4. Further, the quantitative comparison between the proposed neural network \eqref{eq:3} and penalty algorithms ($l_1$, lower-order, and quadratic) given  by Yang and Huang  \cite{27} are presented in Table \ref{tab:9}. It is observed that both the methods demonstrate convergence toward near-optimal solutions. However, the proposed neural  network \eqref{eq:3} exhibits the numerical accuracy and solution reliability.
		\begin{figure}[h]
			\centering
			\begin{subfigure}[b]{0.49\linewidth}
				\centering
				\includegraphics[width=\linewidth]{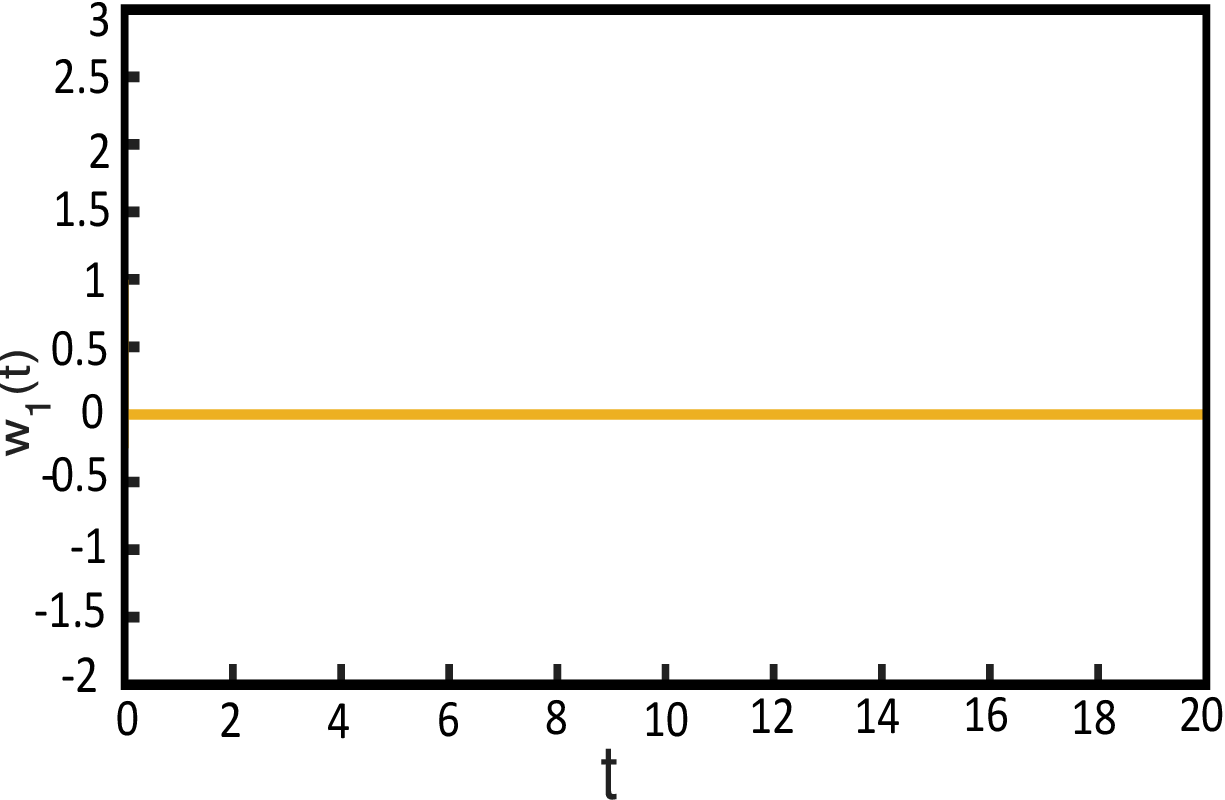}
				\captionsetup{labelformat=empty}  
				\caption{Figure (10a) : Transient behaviors of $w_1(t).$}
				\label{fig:Fig16a}
			\end{subfigure}
			\hfill
			\begin{subfigure}[b]{0.49\linewidth}
				\centering
				\includegraphics[width=\linewidth]{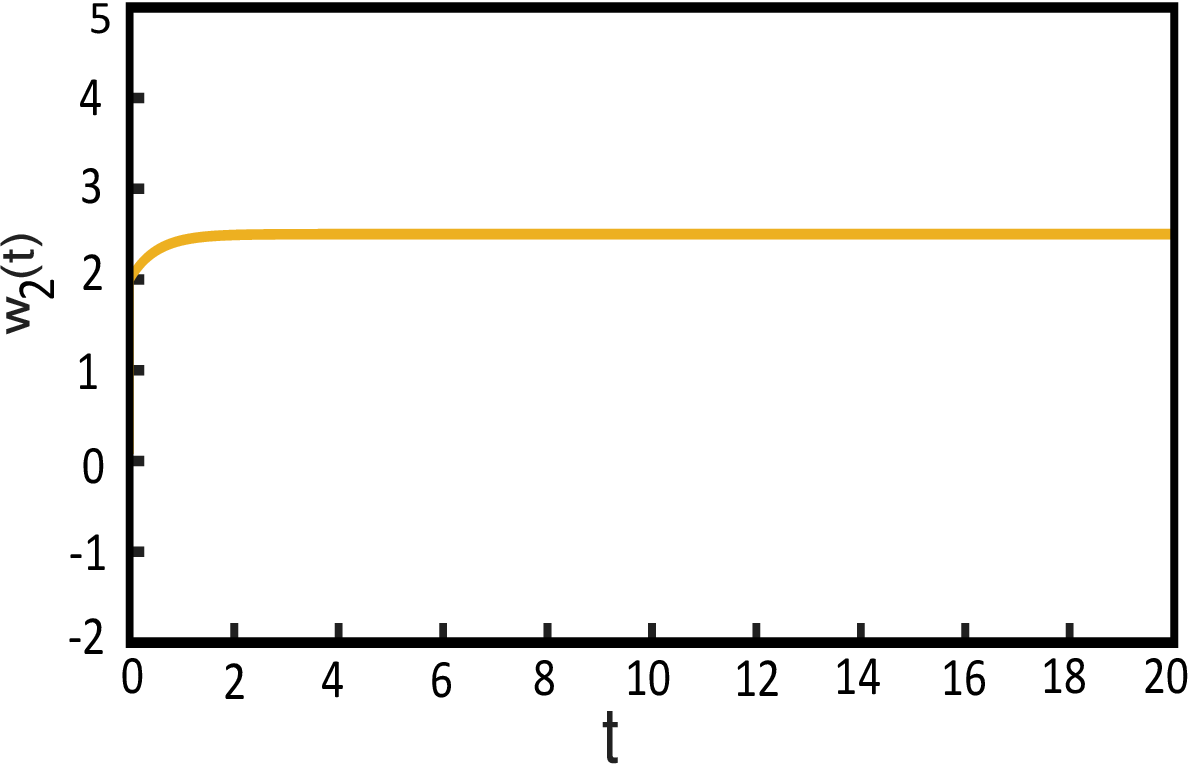}  
				\captionsetup{labelformat=empty}
				\caption{Figure (10b): Transient behaviors of $w_2(t).$}
				\label{fig:Fig16b}
			\end{subfigure}
			\\
			\vspace{1.1em}
			\begin{subfigure}[b]{0.49\linewidth}
				\centering
				\includegraphics[width=\linewidth]{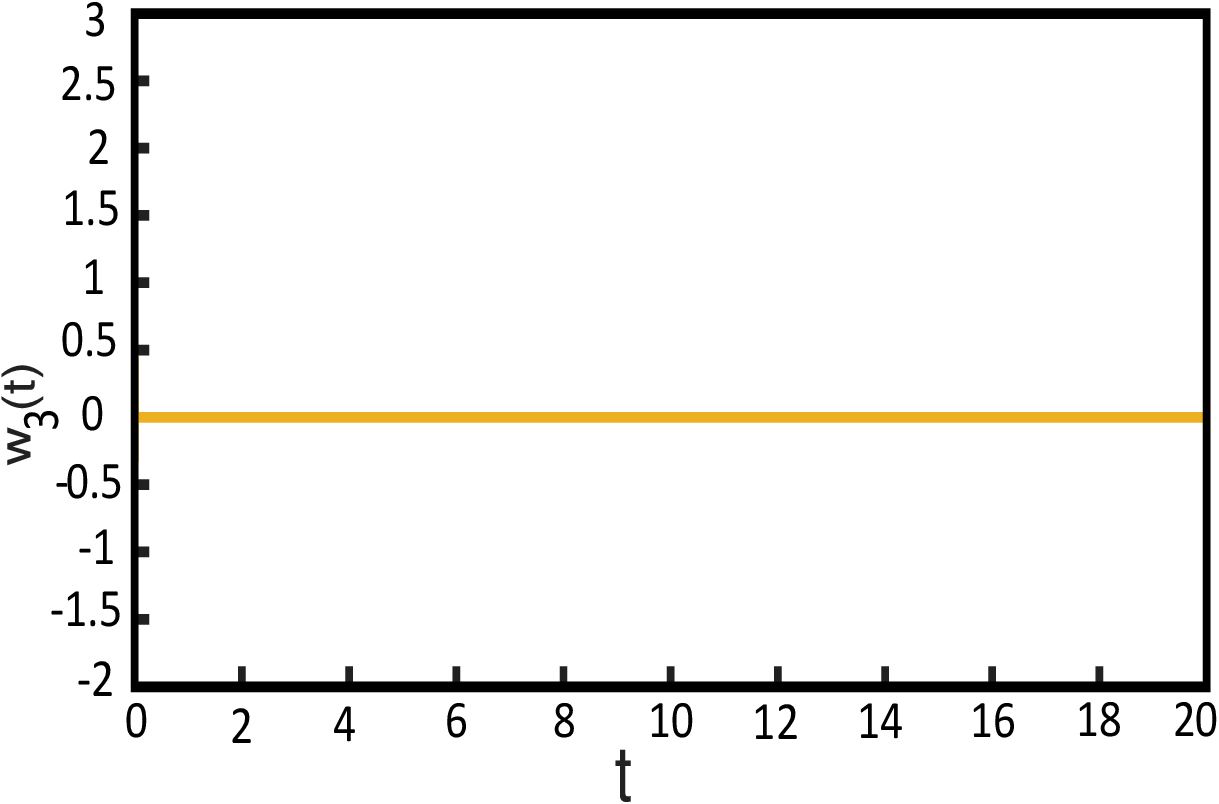}  
				\captionsetup{labelformat=empty}
				\caption{Figure (10c): Transient behaviors of $w_3(t).$}
				\label{fig:Fig16c}
			\end{subfigure}
			
		\end{figure}
	\end{Example}

\begin{table}[h!]
	\hspace*{-1cm}
	\centering
	\setlength{\tabcolsep}{4pt}
	\renewcommand{\arraystretch}{1.0}
	\begin{tabular}{|c|c|c|c|}
		\hline
	Method &	Intial points  & \makecell{Approximate optimal solution\\[-0.7ex] $(w_1^*, w_2^*,w_3^*)$} & \makecell{Approximate optimal value \\[-0.7ex] $f(w_1^*, w_2^*,w_3^*)~$}
	 \\ 
		\hline
		\makecell{$l_1$\\[-0.7ex] penalty algorithm} 	& (1, 1, 1) &
		$(0,2.5,0)$ &
		2 
	 \\
		\hline
\makecell{Lower-order\\[-0.7ex] penalty algorithm} 	&	(1, 1, 1) &
		$(0, 2.5, 0)$ &
		2\\
		\hline
	\makecell{Quadratic \\[-0.7ex] penalty algorithm} 	& (1, 1, 1) & $(-0.0001, 2.5, -0.0001)$ &
		1.9996 \\
		\hline
		\makecell{Proposed\\[-0.7ex] neural network} 
	&	10 random &
		$(-0.000002, 2.500000, -0.000002)$ &
		1.99999  \\
		\hline
	\end{tabular}
	\caption{}
	\label{tab:9}
\end{table}
\end{Example}
	\begin{Example}\cite{27}\label{ex:6} 	Consider the  problem:
	$$
	\begin{aligned}
		\text{(MPCC5)} \quad	\min\quad & (w_1 + 1)^2 + w_2^2 + 10 (w_3 - 1)^2 \\
		\text{subject to}~
		&w_1 \ge 0,~w_3 \ge 0, ~ -e^{w_1} + w_2 - e^{w_3} \ge 0, \\
		& w_1\bigl(-e^{w_1} + w_2 - e^{w_3}\bigr) = 0.
	\end{aligned}
	$$
		\par The transient  behaviors  of the state variables $w_1(t), w_2(t)$ and $w_3(t)$ at  $ \beta = 0.01$ and $\lambda=10^5,$  with 10 random  initial points are exhibited in the Figures~\eqref{fig:Fig17a}, \eqref{fig:Fig17b} and \eqref{fig:Fig17c}, respectively. Using the above mentioned  mathematical tools, the computer simulation shows that  the trajectories of the  neural network \eqref{eq:3}  converges to  approximate optimal solution $w^* = (-0.000074, 2.709987, 0.536561)$ of the problem MPCC4. Further, the quantitative comparison between the proposed neural network \eqref{eq:3} and penalty algorithms ($l_1$, lower-order, and quadratic) given  by Yang and Huang  \cite{27} are presented in Table  \ref{tab:10}. It is observed that both the methods demonstrate convergence toward near-optimal solutions. However, the proposed neural  network \eqref{eq:3} exhibits the numerical accuracy and solution reliability.
	\begin{figure}[h]
		\centering
		\begin{subfigure}[b]{0.49\linewidth}
			\centering
			\includegraphics[width=\linewidth]{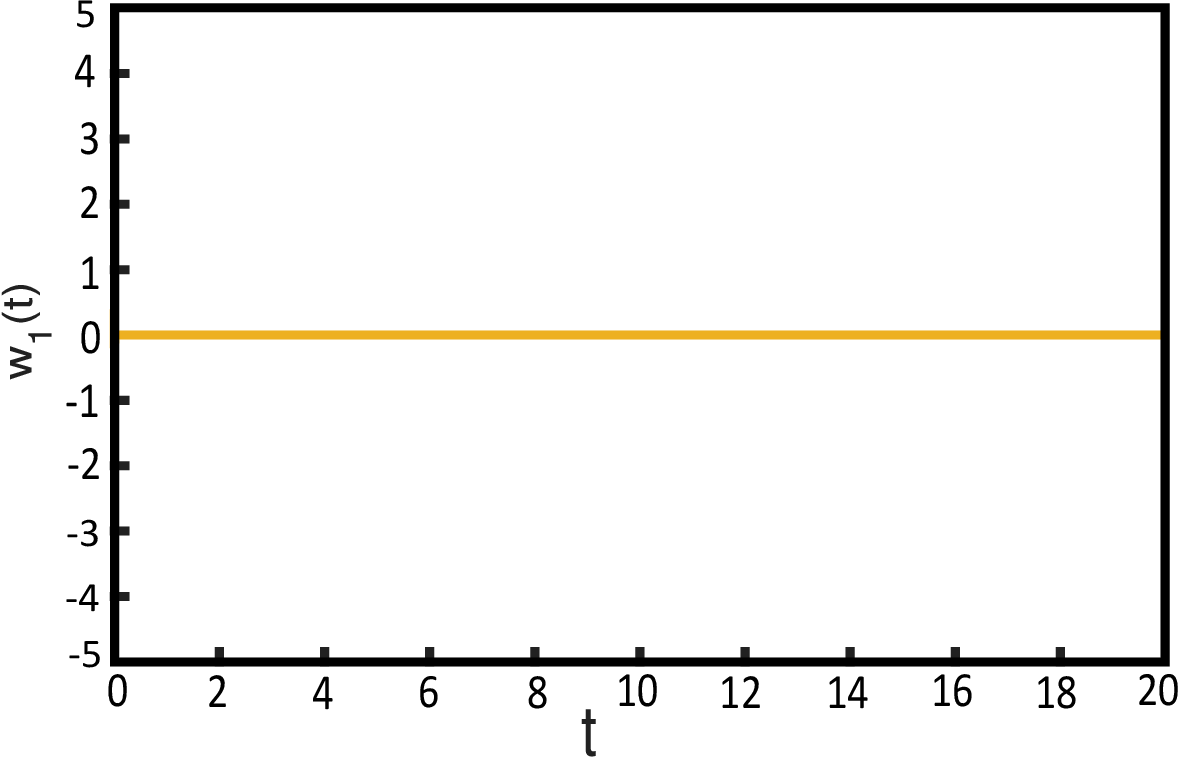}
			\captionsetup{labelformat=empty}  
			\caption{Figure (11a) : Transient behaviors of $w_1(t).$}
			\label{fig:Fig17a}
		\end{subfigure}
		\hfill
		\begin{subfigure}[b]{0.49\linewidth}
			\centering
			\includegraphics[width=\linewidth]{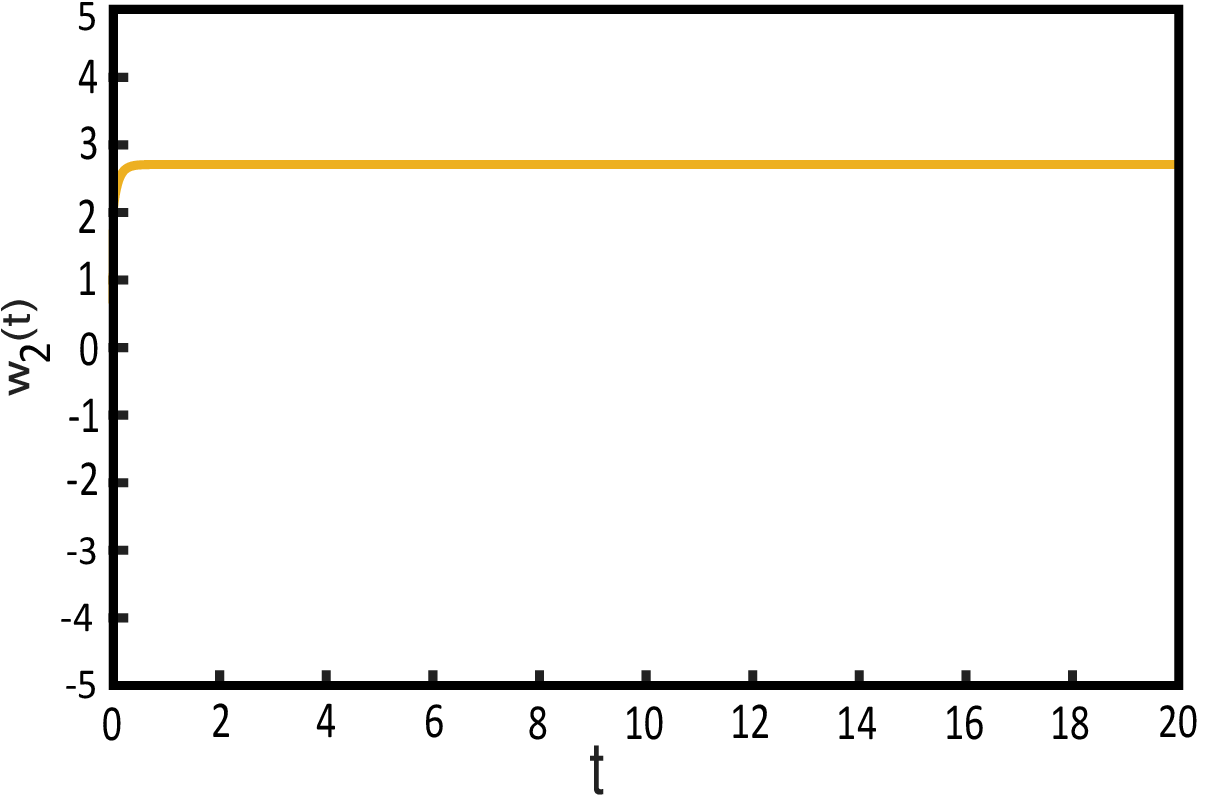}  
			\captionsetup{labelformat=empty}
			\caption{Figure (11b): Transient behaviors of $w_2(t).$}
			\label{fig:Fig17b}
		\end{subfigure}
		\\
		\vspace{1.1em}
		\begin{subfigure}[b]{0.49\linewidth}
			\centering
			\includegraphics[width=\linewidth]{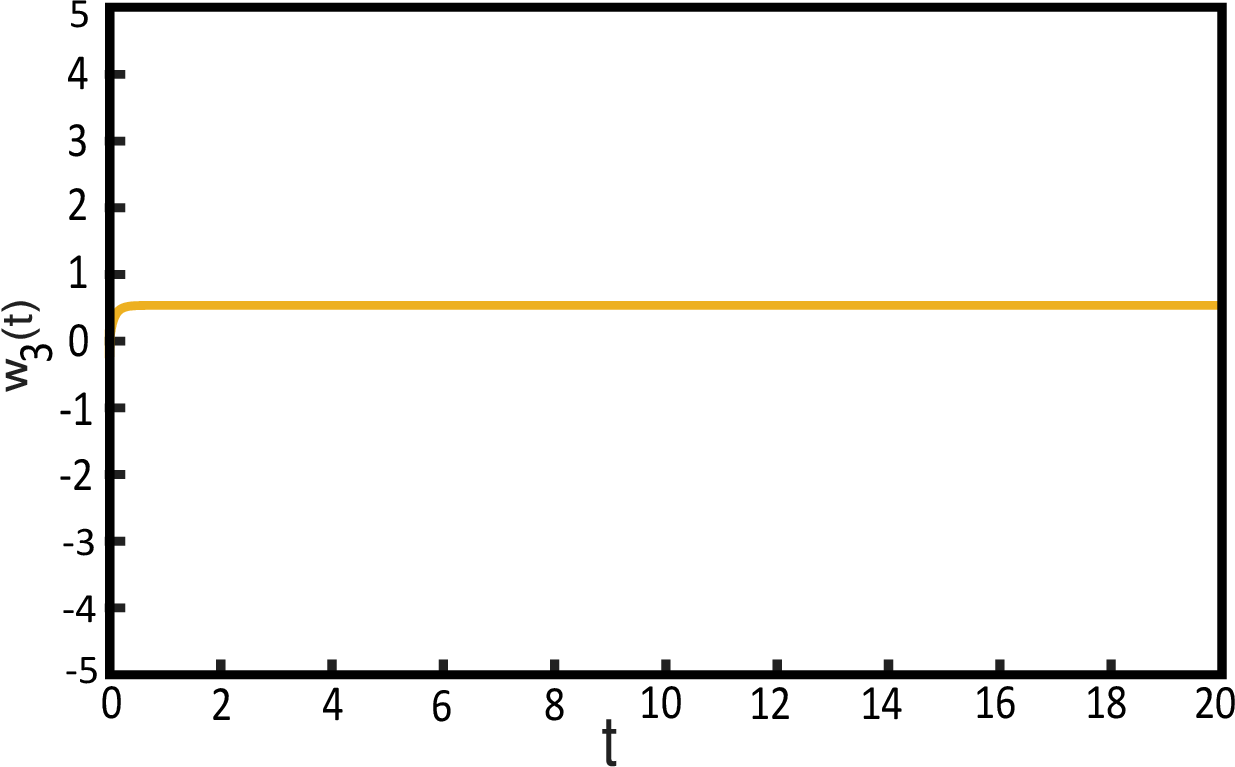}  
			\captionsetup{labelformat=empty}
			\caption{Figure (11c): Transient behaviors of $w_3(t).$}
			\label{fig:Fig17c}
		\end{subfigure}
		
	\end{figure}
\begin{table}[h!]
	\hspace*{-1cm}
	\centering
	\setlength{\tabcolsep}{4pt}
	\renewcommand{\arraystretch}{1.0}
	\begin{tabular}{|c|c|c|c|}
		\hline
		Method &	Intial points  & \makecell{Approximate optimal solution\\[-0.7ex] $(w_1^*, w_2^*,w_3^*)$} & \makecell{Approximate optimal value \\[-0.7ex] $f(w_1^*, w_2^*,w_3^*)~$} \\ 
		\hline
		 \makecell{$l_1$\\[-0.7ex] penalty algorithm} 
		 & (1, 1, 1) &
		$ (0,2.7102,0.5366)$ &
		 10.4923
		\\
		\hline
		\makecell{Lower-order\\[-0.7ex] penalty algorithm} 
		 &	(1, 1, 1) &
		$ (0, 2.7091, 0.536)$ &
		 10.4925\\
		\hline \makecell{Quadratic \\[-0.7ex] penalty algorithm} 
		 	& (1, 1, 1) & $ (0, 2.7111, 0.5372)$ &
		 10.4924\\
		\hline
		\makecell{Proposed\\[-0.7ex] neural network}	&	10 random &
		$(-0.000074, 2.709987, 0.536561)$ &
		10.491639  \\
		\hline
	\end{tabular}
	\caption{}
	\label{tab:10}
\end{table}
\end{Example}
	\begin{Example}\cite{27}\label{ex:7} 	Consider the  problem:
	$$
	\begin{aligned}
		\text{(MPCC6)}\quad	\min \quad & \frac12\Big[(w_1-3)^2+(w_2-4)^2\Big] \\
		\text{Subject to}\quad 
		& w_1,w_2,w_3,w_4 \ge 0,\quad 0\le w_5 \le 10,\\
		& (1+0.2w_5)w_1-(1+1.333w_5)-0.333w_3+2w_1w_4 \ge 0,\\
		& w_1\Big((1+0.2w_5)w_1-(1+1.333w_5)-0.333w_3+2w_1w_4\Big)=0,\\
		& (1+0.1w_5)w_2 - w_5 + w_3 + 2w_2w_4 \ge 0,\\
		& w_2\Big((1+0.1w_5)w_2 - w_5 + w_3 + 2w_2w_4\Big)=0,\\
		& 0.333w_1 - w_2 + 1 - 0.1w_5 \ge 0,\\
		& w_3\big(0.333w_1 - w_2 + 1 - 0.1w_5\big)=0,\\
		& 9+0.1w_5 - w_1^{2}-w_2^{2} \ge 0,\\
		& w_4\big(9+0.1w_5 - w_1^{2}-w_2^{2}\big)=0.
	\end{aligned}
	$$
		\par The transient  behaviors  of the state variables  $w_1(t), w_2(t), w_3(t), w_4(t)$ and $w_5(t)$ at  $ \beta = 0.00001$ and $\lambda=10^4,$  with 10 random  initial points are exhibited in the Figures~\eqref{fig:Fig18a}, \eqref{fig:Fig18b}, \eqref{fig:Fig18c}, \eqref{fig:Fig18d} and \eqref{fig:Fig18e}, respectively. Using the above mentioned  mathematical tools, the computer simulation shows that  the trajectories of the  neural network \eqref{eq:3}  converges to  approximate optimal solution $w^* = (2.552739, 1.640964, -0.000039, 0.032896,$$\\ 2.092187)$ of the problem MPCC6. Further, the quantitative comparison between the proposed neural network \eqref{eq:3} and penalty algorithms ($l_1$, lower-order, and quadratic) given  by Yang and Huang  \cite{27} are presented in Table \ref{tab:10}. It is observed that both the methods demonstrate convergence toward near-optimal solutions. However, the proposed neural  network \eqref{eq:3} exhibits the numerical accuracy and solution reliability. Also, for the problem MPCC6,  Outrata \cite{9} obtained the optimal value  as 
		3.2077, while the proposed neural network  \eqref{eq:3} gives the optimal value   2.88254.
	\begin{figure}[!htbp]
		\centering
		\begin{subfigure}[b]{0.47\linewidth}
			\centering
			\includegraphics[width=\linewidth]{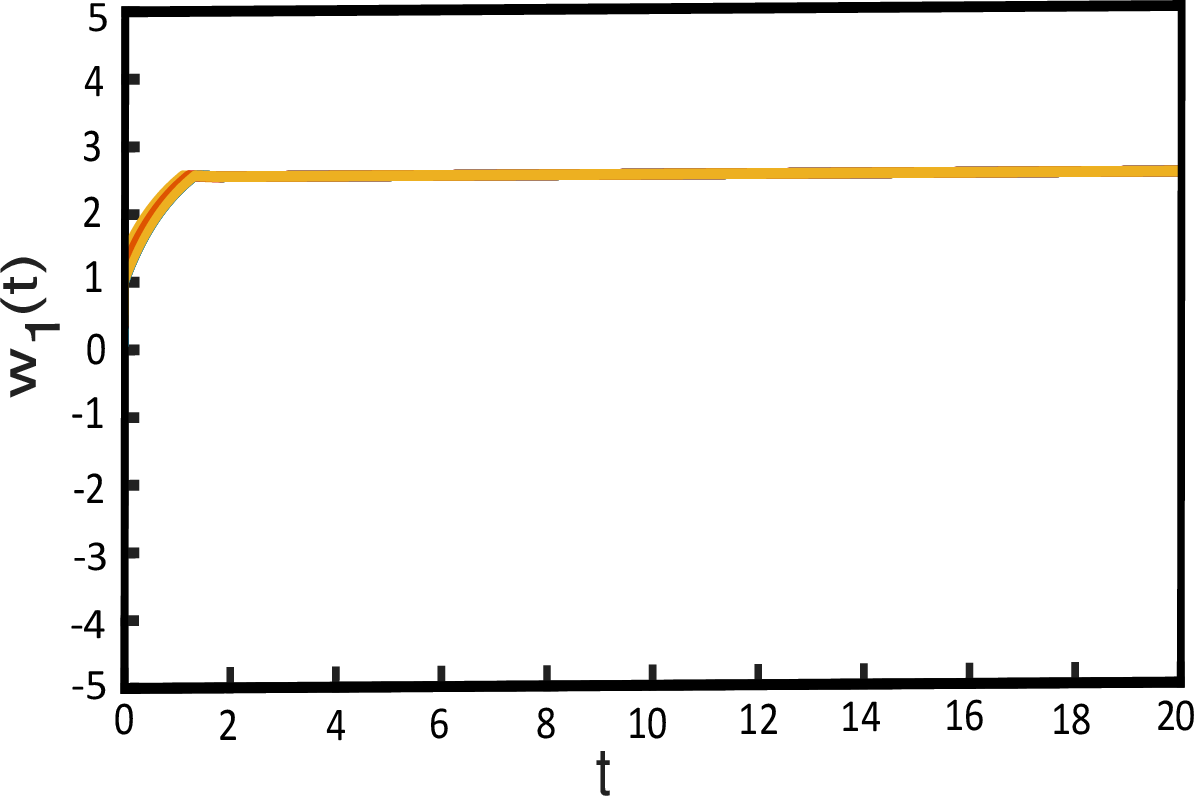}
			\captionsetup{labelformat=empty}
			\caption{Figure (12a): Transient behaviors of $w_1(t).$}
			\label{fig:Fig18a}
		\end{subfigure}
		\hfill
		\begin{subfigure}[b]{0.49\linewidth}
			\centering
			\includegraphics[width=\linewidth]{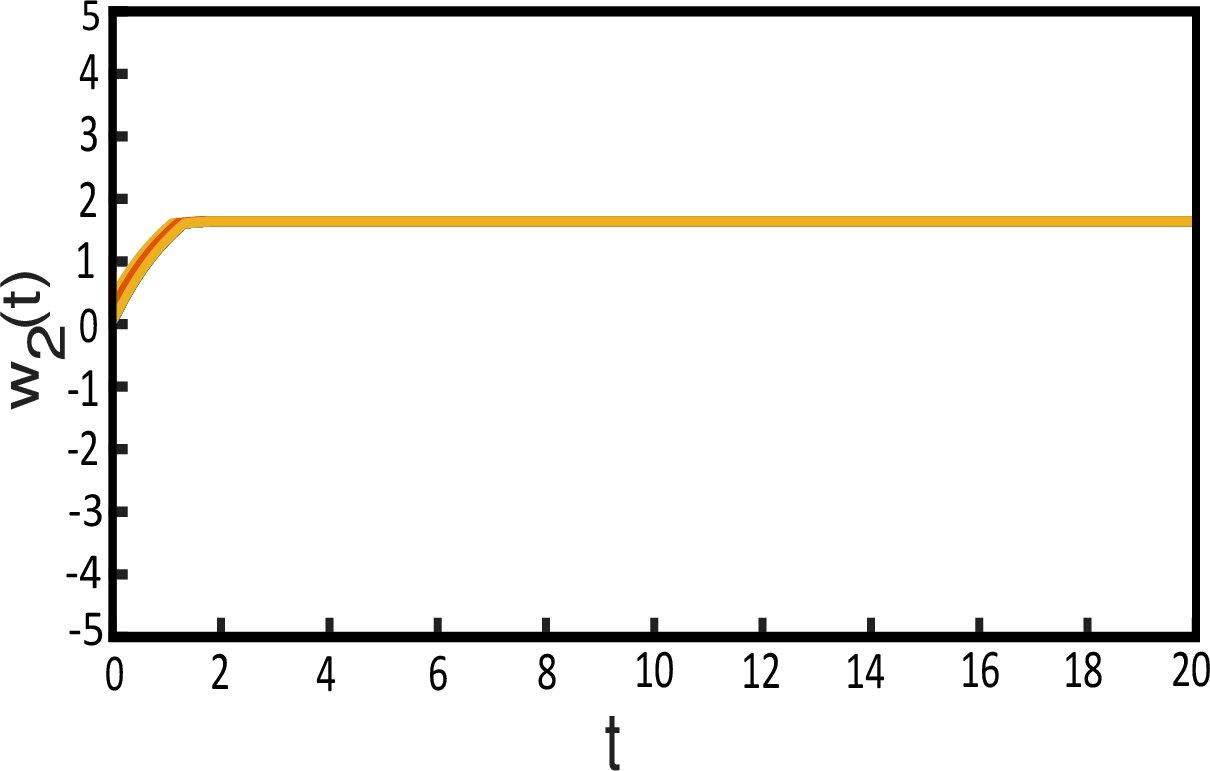}
			\captionsetup{labelformat=empty}
			\caption{Figure (12b): Transient behaviors of $w_2(t).$}
			\label{fig:Fig18b}
		\end{subfigure}
		\vspace{0.8em}
		\begin{subfigure}[b]{0.49\linewidth}
			\centering
			\includegraphics[width=\linewidth]{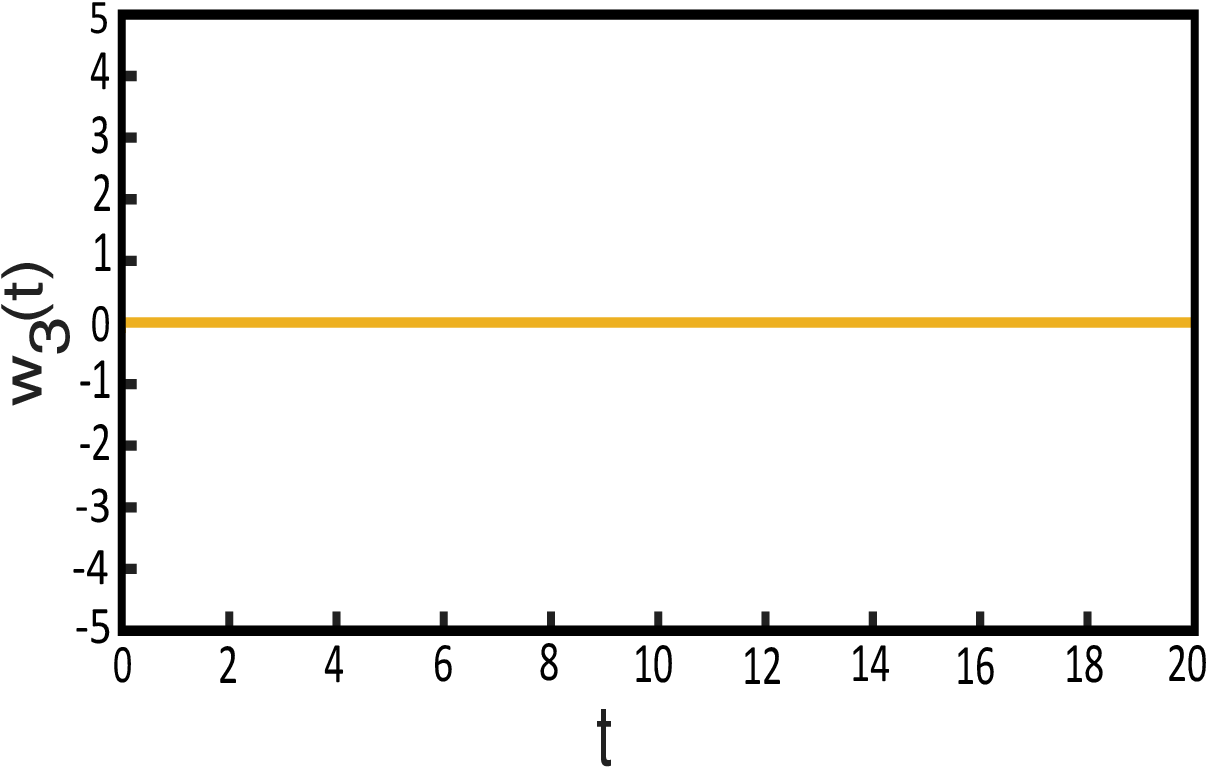}
			\captionsetup{labelformat=empty}
			\caption{Figure (12c): Transient behaviors of $w_3(t).$}
			\label{fig:Fig18c}
		\end{subfigure}
		\hfill
		\begin{subfigure}[b]{0.49\linewidth}
			\centering
			\includegraphics[width=\linewidth]{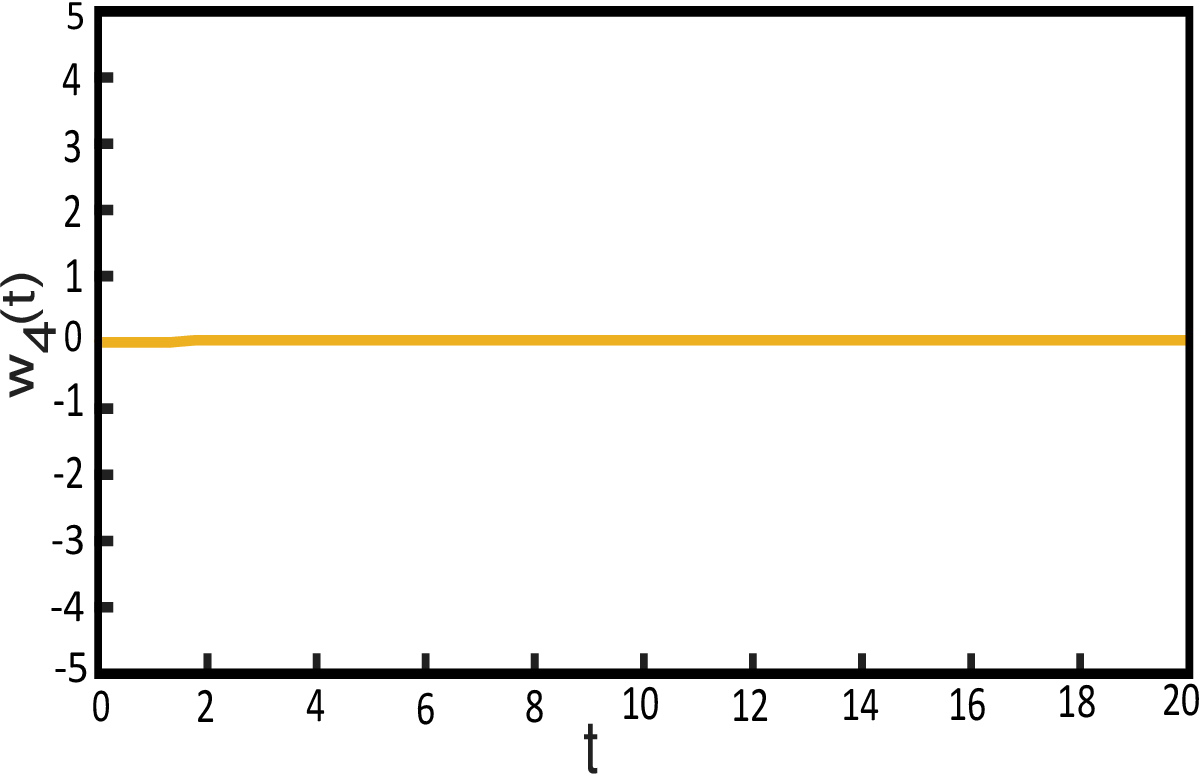}
			\captionsetup{labelformat=empty}
			\caption{Figure (12d): Transient behaviors of $w_4(t).$}
			\label{fig:Fig18d}
		\end{subfigure}
		\\
		\begin{subfigure}[b]{0.49\linewidth}
			\centering
			\includegraphics[width=\linewidth]{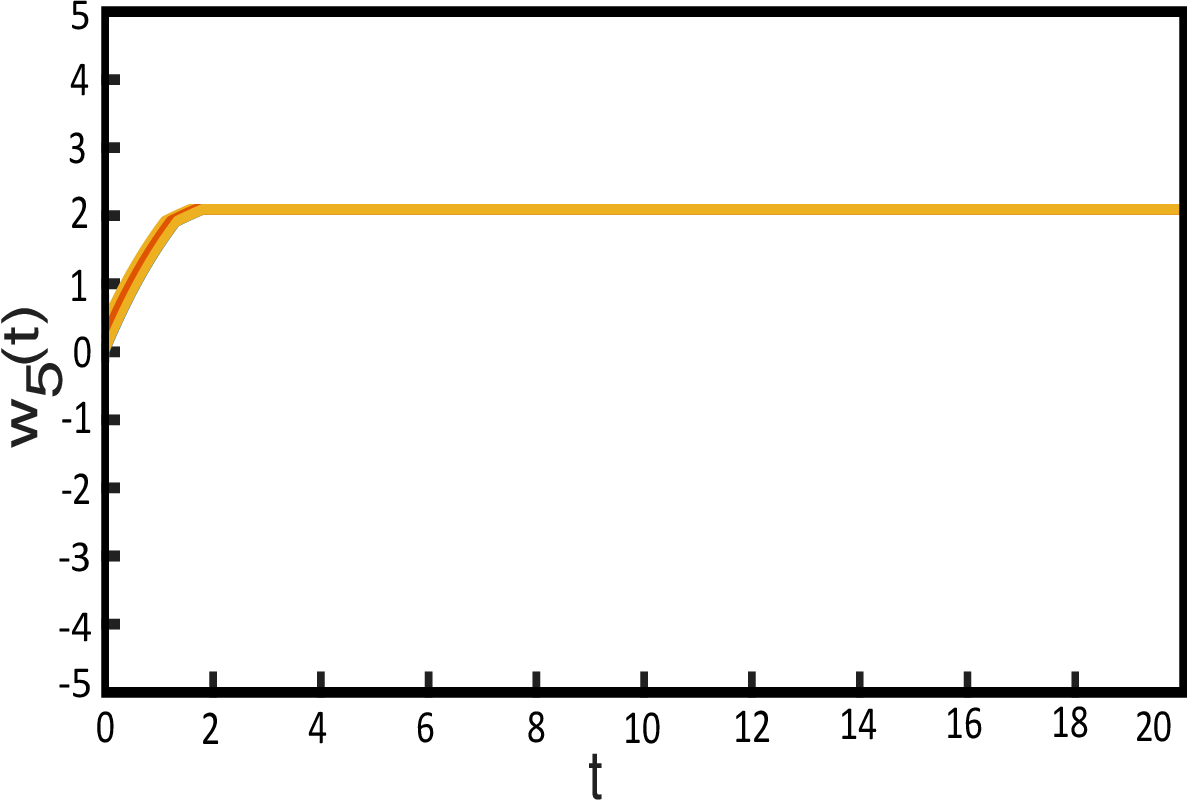}
			\captionsetup{labelformat=empty}
			\caption{Figure (12e): Transient behaviors of $w_5(t).$}
			\label{fig:Fig18e}
		\end{subfigure}
	\end{figure}
	
	\begin{table}[h!]
		\centering
		\setlength{\tabcolsep}{0.1pt}
		\renewcommand{\arraystretch}{1.2}
		\begin{tabular}{|c|c|c|c|}
			\hline
			Method &	Intial points &\makecell{Approximate optimal solution\\[-0.7ex] $(w_1^*, w_2^*,w_3^*, w_4^*,w_5^*)$}& \makecell{Approximate optimal value \\[-0.7ex] $f(w_1^*, w_2^*,w_3^*, w_4^*,w_5^*)~$} \\ 
			\hline
			\makecell{$l_1$\\[-0.7ex] penalty algorithm} 	& (0,0,0,0,0) &
			$  (2.5528,1.6409,0,0.0329,2.0921)$ &
		 2.8827
			\\
			\hline
			\makecell{$l_1$\\[-0.7ex] penalty algorithm} 	&	(0, 0, 0, 0, 10) &
			(2.5528, 1.6409, 0, 0.0329, 2.092) &
		 2.8827\\
			\hline
			\makecell{Lower-order\\[-0.7ex] penalty algorithm} 	& (0, 0, 0, 0, 0) &  (2.5528, 1.6409, 0, 0.0329, 2.092) &
			 2.8827\\
			\hline
			\makecell{Lower-order\\[-0.7ex] penalty algorithm} 	& (0, 0, 0, 0, 10) &
			(2.5528, 1.6409, 0, 0.0328, 2.092) &
			 2.8827
			\\
			\hline
			\makecell{Quadratic\\[-0.7ex] penalty algorithm} 	&	(0, 0, 0, 0, 0) &
			 (2.5528, 1.6409, 0, 0.0329, 2.092) &
			 2.8827\\
			\hline
			\makecell{Quadratic\\[-0.7ex] penalty algorithm} 	& (0, 0, 0, 0, 0) &  (2.5528, 1.64, 0.0001, 0.0329, 2.092) &
			2.8825
			\\
			\hline
			\makecell{Proposed\\[-0.7ex] neural network}	&	10 random &
			\makecell{(2.552739, 1.640964, -0.000039,\\ 0.032896, 2.092187)} &
		2.88254  \\
			\hline
		\end{tabular}
		\caption{}
		\label{tab:11}
	\end{table}
\end{Example}
\newpage
\section*{Observation:}
The proposed  neural network framework provides a  competitive way for solving the problem MPCC. Unlike the penalty-function algorithms ($l_1$, lower-order, and quadratic), which are implemented from a single prescribed starting point and require repeatedly solving a sequence of smooth penalty subproblems with updated parameters. The proposed  method generates solution trajectories directly from the continuous-time descent dynamics of an energy function and  use of  10 random initialization improves the solution reliability. Across the tested problems (Examples \ref{ex:5}, \ref{ex:6} and \ref{ex:7}), the approximate optimal solutions produced by the proposed neural network \eqref{eq:3} achieve optimal values that are consistently comparable to penalty algorithms and in many cases smaller than the values obtained by  Yang and Huang\cite{27} for the same  problems, while simultaneously maintaining the complementary structure through regularization. Also, for the problem MPCC6,  Outrata \cite{9} obtained the optimal value  as 
3.2077, while the proposed neural network  \eqref{eq:3} gives the optimal value   2.88254, which is smaller than those obtained by  Outrata \cite{9}. 
\par These observations indicate that the proposed neural network demonstrates improved numerical accuracy and enhanced solution reliability. Further, the proposed neural network is not only numerically stable and easy to implement (via standard ODE solvers), but also effective in producing high-accuracy  optimal solutions for the problem MPCC.
\section{Conclusion}
In this study, a gradient-based  neural network constructed to solve the mathematical programming problems  with complementary constraints MPCC. By employing the regularization techniques, the problem MPCC is transformed into an equivalent relaxed nonlinear programming problem NLP($\beta$). Using the penalty  function methods an energy function was constructed, whose minimum  represents the optimal solution to the  problem NLP($\beta$). The neural network has a simple one-layer architecture and low computational complexity which  makes it well-suited for hardware implementation and real-time applications. Based on Lyapunov stability theory and the LaSalle invariance principle,  the proposed neural network  asymptotic stable over its equilibrium point. Further,  computer simulations validate the proposed model is effective and reliable in obtaining the optimal solution of the problem MPCC. This approach can be  further extended to the multi-objective  and conic semi-infinite mathematical programming problems with  complementary constraints  incorporating the concepts of generalized convexity.\\
\par
\noindent
\textbf{Funding:} No funding was received to assist with the preparation of this manuscript.\\
\par
\noindent
\textbf{Author Contributions:} All authors contributed equally to this work.\\
\par
\noindent
\textbf{Data availability:} The data will be shared upon reasonable request..
\section*{Declarations} 
\noindent
\textbf{Conflict of interest: } The authors assert that they have no conflicts of interest.

\end{document}